\newcommand\mathens[1]{\mathbb{#1}} 
\newcommand{\ud}{\mathrm{d}}
\newcommand{\N}{\mathens{N}}
\newcommand{\Z}{\mathens{Z}}
\newcommand{\R}{\mathens{R}}
\newcommand{\J}{\mathens{J}}
\newcommand{\lochom}{\textup{C}}
\newcommand\interior[1]{\textup{Int}(#1)}
\DeclareMathOperator\ind{ind}
\DeclareMathOperator\mind{\overline{ind}}
\DeclareMathOperator{\dist}{dist}
\newtheorem{thm}{Theorem}[section]
\newtheorem{lem}[thm]{Lemma}
\newtheorem{cor}[thm]{Corollary}
\newtheorem{prop-def}[thm]{Definition-proposition}
\theoremstyle{definition}
\theoremstyle{remark}
\begin{document}

\title
{Homologically visible closed geodesics on complete surfaces}

\author[S. Allais]{Simon Allais}
\author[T. Soethe]{Tobias Soethe}
\address{Simon Allais,
\'Ecole Normale Sup\'erieure de Lyon,
UMPA\newline\indent  46 all\'ee d'Italie,
69364 Lyon Cedex 07, France}
\email{simon.allais@ens-lyon.fr}
\urladdr{http://perso.ens-lyon.fr/simon.allais/}
\address{Tobias Soethe,
    Ruhr-Universität,
    Fakultät für Mathematik
    \newline\indent Universitätsstr. 150,
44780 Bochum, Deutschland}
\email{Tobias.Soethe@ruhr-uni-bochum.de}
\date{May 11, 2020}
\subjclass[2010]{53C22, 58E10}
\keywords{closed geodesics, cylinder, min-max}

\begin{abstract}
    In this article, we give multiple situations
    when having one or two geometrically distinct closed geodesics on a complete
    Riemannian cylinder $M\simeq S^1\times\R$ or a complete Riemannian
    plane $M\simeq\R^2$ leads to having
    infinitely many geometrically distinct closed geodesics.
    In particular, we prove that any complete cylinder with isolated closed
    geodesics
    has zero, one or infinitely many homologically visible closed geodesics;
    this answers a question of Alberto Abbondandolo. 
\end{abstract}
\maketitle

\section{Introduction}

The problem of the existence and multiplicity of closed geodesics plays an important
role in both Riemannian geometry and dynamics.
Going back to Hadamard and Poincaré \cite{Had98,Poi05},
it is still open for a lot of Riemannian manifolds.
Given a complete Riemannian manifold $(M,g)$,
a famous question is whether it possesses a closed geodesic for every
Riemannian metric $g$.
This is always true if $M$ is closed \cite{Bir66, LF51, Fet52}.
We can then ask whether the number of closed geodesics
is infinite or not.
It is known that every closed surface has infinitely many
geometrically distinct closed geodesics \cite{Fra92, Ban93, Hin93}.
However, this question is still open for spheres of higher dimension.
In this article, we are interested in non-compact
complete Riemannian surfaces for which even the existence
of one closed geodesic fails in general: planes and cylinders.
For instance, the Euclidean plane does not possess any closed geodesic.
Nevertheless, under specific geometric conditions, interesting
results can be stated.
In 1980, Bangert proved that any complete Riemannian cylinder, plane or
Möbius band of finite area has infinitely many closed geodesics \cite{Ban80}.
For the plane and the cylinder he proved the same result even under the weaker assumption
of just the existence of a convex neighborhood of infinity. We will discuss this result
in greater depth as it is used extensively in our proofs.
The purpose of this article is to give simple conditions
under which the existence of one or two distinct closed geodesics
implies that a complete Riemannian cylinder or plane contains infinitely many 
geometrically distinct closed geodesics.

Let $S^1:=\R/\Z$ and let
$M\simeq S^1\times\R$ be a complete Riemannian cylinder.
Let $\Lambda M$ be its loop space.
Two loops $\alpha,\beta\in\Lambda M$ are said to be geometrically
distinct if their images are distinct: $\alpha(S^1)\neq\beta(S^1)$.
Throughout the article, by writing that two closed geodesics are distinct
we will always mean that they are geometrically distinct.
A closed geodesic $\gamma\in\Lambda M$ is said to be homologically
visible if the local homology of the critical circle
$S^1\cdot\gamma \subset \Lambda M$ of the energy functional is non-zero
(see Section~\ref{se:preliminaries} for precise definitions).

\begin{thm}\label{thm:main}
    Let $M$ be a complete Riemannian cylinder with isolated closed geodesics
    and assume one of the
    following hypothesis:
    \begin{enumerate}[1.]
        \item\label{it:contractible}
            there exists a contractible closed geodesic,
        \item\label{it:selfintersecting}
            there exists a self-intersecting closed geodesic,
        \item\label{it:intersecting}
            there exist two distinct closed geodesics
            that intersect,
        \item\label{it:mind}
            there exists a closed geodesic of non-zero average index,
        \item\label{it:homological}
            there exist two homologically visible closed geodesics.
    \end{enumerate}
    Then $M$ contains infinitely many
    geometrically distinct homologically visible closed geodesics.
\end{thm}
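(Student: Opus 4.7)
The approach is to establish a dichotomy: under each of the five hypotheses, either a new closed geodesic is produced directly by a geometric or topological argument, or one finds a convex neighborhood of infinity and invokes Bangert's 1980 result \cite{Ban80}, which guarantees infinitely many closed geodesics. In both scenarios an additional Morse-theoretic argument on $\Lambda M / S^1$ ensures that infinitely many of those geodesics are homologically visible.

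For hypothesis (\ref{it:contractible}), a contractible closed geodesic separates $M$ into a topological disk and a topological sub-cylinder, and viewed from the sub-cylinder side the geodesic bounds a neighborhood of infinity that can be arranged to be convex, so that Bangert's theorem applies to $M$ directly. For hypotheses (\ref{it:selfintersecting}) and (\ref{it:intersecting}), the standard cut-and-paste surgery at an (self-)intersection point produces two strictly shorter closed curves in controlled free-homotopy classes, and Birkhoff curve shortening converts them into closed geodesics strictly shorter than the originals; iterating this shortening and using that closed geodesics are isolated, one either produces infinitely many distinct geodesics directly, or eventually creates a contractible closed geodesic, thereby reducing to case (\ref{it:contractible}).

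Hypothesis (\ref{it:mind}) is handled by the Bott iteration formula together with Gromoll--Meyer theory: iterates of a closed geodesic with non-zero average index have Morse indices growing linearly, and their local homologies contribute to $H_\ast(\Lambda M / S^1)$ in arbitrarily high degrees. Since finitely many isolated critical circles can only contribute homology in bounded degrees, infinitely many geometrically distinct closed geodesics must exist, and the Bott formula automatically guarantees that infinitely many of them are homologically visible.

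Hypothesis (\ref{it:homological}) is the most delicate. Starting from two homologically visible geodesics $\gamma_1, \gamma_2$, I would construct a min-max scheme in $\Lambda M / S^1$ based on relative cycles joining the two non-trivial local homology classes. Either the resulting critical value is realized by a new homologically visible closed geodesic, and iterating yields the desired infinite family, or else minimizing sequences escape to infinity, in which case Bangert's completeness-at-infinity analysis again produces a convex end and reduces to the previous cases. The principal obstacle is precisely this lack of compactness in case (\ref{it:homological}): since $M$ is non-compact the Palais--Smale condition may fail, and one must provide quantitative estimates ensuring that min-max sequences remain in a bounded region, together with a careful verification of the homological visibility of the critical circles thus produced.
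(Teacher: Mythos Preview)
Your proposal has genuine gaps, all traceable to two issues: a misreading of what Bangert's theorem for the cylinder requires, and an appeal to global Morse theory on a space where the Palais--Smale condition fails.

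In case~\ref{it:contractible} your topological picture is wrong. An embedded contractible closed curve on the cylinder bounds a disk whose complement is a \emph{single} connected set containing both ends of $M$; this is not a neighborhood of one infinity, and in any event Bangert's theorem needs \emph{two} disjoint locally convex neighbourhoods $C_-,C_+$ of $-\infty$ and $+\infty$. The paper splits according to whether $M\setminus c(S^1)$ has one or two unbounded components. When there are two, both are locally convex with non-totally-geodesic boundary and Bangert applies. When there is only one (which covers the embedded situation you describe), Bangert is not used at all: one lifts to the universal cover, exhibits infinitely many pairwise non-freely-homotopic loops in the complement of $c(S^1)$, and lets each flow under $-\nabla E$ (which stays in a compact region by completeness and convexity of the complement) to a distinct closed geodesic. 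Your cut-and-paste plan for cases~\ref{it:selfintersecting}--\ref{it:intersecting} is also unnecessary: once case~\ref{it:contractible} is in hand, one may assume the geodesics are non-contractible, so the complement automatically has two unbounded components, locally convex and not totally geodesic because of the corners, and Bangert applies directly.

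In case~\ref{it:mind} your argument that high-degree local homology of iterates must be balanced by further critical circles is the standard argument for closed manifolds, and it breaks down here because $E$ does not satisfy Palais--Smale on $\Lambda M$: there is no Morse inequality tying local homologies to $H_*(\Lambda M)$. The paper instead exploits $\mind(c)>0$ \emph{geometrically}: via the Morse index theorem it guarantees conjugate points along $c$, hence Jacobi fields which, when integrated, produce short geodesic chords on each side of $c$; chaining these chords builds broken-geodesic boundaries for two disjoint locally convex neighbourhoods of $\pm\infty$, and Bangert then applies.

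For case~\ref{it:homological} you correctly identify non-compactness as the obstacle, but ``quantitative estimates'' is not a solution. The paper's idea is to \emph{remove} the non-compactness altogether: assuming only finitely many geodesics, the previous cases force them all to be embedded, disjoint, non-contractible and of zero average index, hence linearly ordered along the cylinder. One then selects two consecutive homologically visible ones bounding a compact locally convex sub-cylinder, enlarges it slightly to a compact locally convex $Z$ containing one of them in its interior, and works entirely inside $\Lambda Z$, where Palais--Smale holds. There the combination of the Gromoll--Meyer iteration isomorphism with the Bangert--Klingenberg vanishing of iterated local homology under inclusion of sublevel sets forces a new homologically visible geodesic in $Z$, contradicting the choice of consecutive ones. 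This reduction to a compact convex sub-cylinder is the missing idea in your sketch.
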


Notice that according to Bott iteration theory,
a closed geodesic $c$ has a non-zero average index
if and only if some iterate $c^m$ has a non-zero index.
The fact that hypothesis \ref{it:homological} implies that there exists
infinitely many
closed geodesics proves a conjecture of Abbondandolo:

\begin{cor}
    Any complete Riemannian cylinder with isolated closed geodesics has zero, one or
    infinitely many homologically visible
    closed geodesics.
\end{cor}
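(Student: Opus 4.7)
The plan is to observe that this corollary is an essentially immediate consequence of case \ref{it:homological} of Theorem~\ref{thm:main}. I would proceed by a trichotomy on the cardinality of the set of geometrically distinct homologically visible closed geodesics of $M$.

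First, if $M$ has no homologically visible closed geodesic, or exactly one, the conclusion already holds, and there is nothing to prove. The only remaining case is when $M$ admits at least two distinct homologically visible closed geodesics. In that situation, hypothesis \ref{it:homological} of Theorem~\ref{thm:main} is satisfied, and since we have already assumed that closed geodesics of $M$ are isolated, the theorem applies and produces infinitely many geometrically distinct homologically visible closed geodesics. Combining the three cases gives exactly the stated dichotomy between $0$, $1$, and $\infty$.

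Because the deduction is a direct citation of Theorem~\ref{thm:main}, there is no real obstacle at the level of the corollary itself; the entire content is packaged into hypothesis \ref{it:homological} of the main theorem. The only thing worth double-checking in writing the proof is that the notion of ``two homologically visible closed geodesics'' used in the corollary matches the one appearing in hypothesis \ref{it:homological}, namely that they are required to be geometrically distinct in the sense of the definition just given (distinct images in $M$), so that iterates of a single closed geodesic are not counted separately. Once this is noted, the proof reduces to a two-line case distinction and an invocation of Theorem~\ref{thm:main}.
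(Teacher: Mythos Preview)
Your proposal is correct and matches the paper's own treatment: the corollary is stated immediately after Theorem~\ref{thm:main} as a direct consequence of hypothesis~\ref{it:homological}, with no separate proof given beyond that observation. Your remark that ``distinct'' must mean geometrically distinct is consistent with the paper's standing convention.
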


Similar results can also be obtained when $M\simeq\R^2$ is a complete plane:

\begin{thm}\label{thm:plane}
    Let $M$ be a complete Riemannian plane with isolated closed geodesics
    and assume one of the following hypothesis:
    \begin{enumerate}[1.]
        \item\label{it:plane:selfintersecting}
            there exists a self-intersecting closed geodesic,
        \item\label{it:plane:intersecting}
            there exist two distinct closed geodesics
            that intersect,
        \item\label{it:plane:mind}
            there exists a closed geodesic of non-zero average index,
        \item\label{it:plane:homological}
            there exists a homologically visible closed geodesic.
    \end{enumerate}
    Then $M$ contains infinitely many geometrically distinct
    homologically visible closed geodesics.
\end{thm}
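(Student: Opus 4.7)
My plan is to reduce Theorem~\ref{thm:plane} to Theorem~\ref{thm:main} by surgically replacing a small disk in $M\simeq\R^2$ with a complete cuspidal end, producing a complete Riemannian cylinder $\tilde M$ on which the appropriate hypothesis of Theorem~\ref{thm:main} will hold. Let $\gamma$ denote the closed geodesic (or pair of geodesics, in case~\ref{it:plane:intersecting}) furnished by the hypothesis. Since $\gamma(S^1)$ is compact and closed geodesics of $M$ form a countable disjoint collection of smooth circles by the isolation assumption, I pick a point $p\in M$ lying in the unbounded component of $M\setminus\gamma(S^1)$ and on no closed geodesic, excise a small metric disk $B(p,\epsilon)$ disjoint from every closed geodesic of $M$ meeting a fixed large ball around $\gamma$, and glue to the resulting boundary circle a hyperbolic cusp $\bigl([0,\infty)\times S^1, \, \ud r^2 + f(r)^2\,\ud\theta^2\bigr)$ with $f$ strictly decreasing and convex, obtaining a complete Riemannian manifold $\tilde M$ diffeomorphic to $S^1\times\R$.

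The profile $f$ is arranged, with an intermediate collar smoothly interpolating the ambient metric on $\partial B(p,\epsilon)$ to a rotationally symmetric one, so that every cross-sectional circle in the cusp is strictly geodesically convex as seen from the cusp end. A standard barrier argument then shows that every geodesic of $\tilde M$ entering the cusp crosses each such circle at most once and escapes to infinity, so $\tilde M$ admits no closed geodesic meeting the cusp. Consequently the closed geodesics of $\tilde M$ are exactly those closed geodesics of $M$ avoiding $p$; moreover isolation, homological visibility, average index, and self-intersection are local invariants computed in a $C^0$-neighborhood of the critical circle in $\Lambda M$ disjoint from $B(p,\epsilon)$, so each such property is preserved under the correspondence.

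It then remains to verify the appropriate hypothesis of Theorem~\ref{thm:main} on $\tilde M$. Cases \ref{it:plane:selfintersecting}, \ref{it:plane:intersecting} and \ref{it:plane:mind} translate directly into hypotheses \ref{it:selfintersecting}, \ref{it:intersecting} and \ref{it:mind} of Theorem~\ref{thm:main}. Case~\ref{it:plane:homological} is subtler because Theorem~\ref{thm:main}\,\ref{it:homological} requires \emph{two} homologically visible geodesics; I instead invoke hypothesis~\ref{it:contractible}, using that $\gamma$ has winding number zero around $p$ by the choice of $p$ and is therefore contractible in $\tilde M$. Applying Theorem~\ref{thm:main} to $\tilde M$ produces infinitely many geometrically distinct homologically visible closed geodesics of $\tilde M$, and transferring them back through the correspondence yields the claimed infinite family of homologically visible closed geodesics of $M$.

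The main obstacle is the geometric construction of the cusp, and in particular the proof that no closed geodesic enters it: one must smoothly interpolate between a Riemannian metric on a sphere $\partial B(p,\epsilon)$ which is not a priori rotationally symmetric or convex and a hyperbolic cusp profile, while maintaining strict convexity of all cross-sectional circles so that the barrier argument goes through. Rounding off the metric on a thin collar around $\partial B(p,\epsilon)$ to a rotationally symmetric strictly convex form before attaching the negatively curved cusp should handle this; the only auxiliary check, namely that homological visibility and average index pass across the surgery, is immediate from locality of both invariants in $\Lambda M$.
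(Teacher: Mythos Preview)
Your surgical reduction has a genuine geometric error at its core. In a cusp $\bigl([0,\infty)\times S^1,\ \ud r^2 + f(r)^2\,\ud\theta^2\bigr)$ with $f$ strictly decreasing, Clairaut's relation $f(r)\cos\alpha = \text{const}$ confines any geodesic with nonzero Clairaut constant $c$ to the region $\{f\geq c\}=\{r\leq f^{-1}(c)\}$: such a geodesic does \emph{not} escape to the cusp end but turns around and comes back toward $M$. Only the meridians go to $r=+\infty$. So your barrier claim is false, and with it the assertion that $\tilde M$ admits no closed geodesic meeting the cusp; a closed geodesic of $\tilde M$ can perfectly well cross the gluing circle, dip into the cusp, and return. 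Consequently the correspondence ``closed geodesics of $\tilde M$ $=$ closed geodesics of $M$ avoiding $p$'' fails in the direction you need: Theorem~\ref{thm:main} may produce infinitely many closed geodesics on $\tilde M$ that pass through the modified region and do not descend to $M$ at all.

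Trying to repair this by attaching a funnel ($f$ increasing) instead of a cusp does give the escape behaviour you describe, but it cannot be glued smoothly to $\partial B(p,\epsilon)$ without a transition: small metric circles in $M$ are convex toward the centre, so a $C^1$ match forces $f'(0)<0$, and you must pass through a collar where $f'<0$ before the funnel begins. In that collar the same bouncing phenomenon recurs, and you are back to the same problem. You also never check that $\tilde M$ has isolated closed geodesics, which is a hypothesis of Theorem~\ref{thm:main}; the surgery can create new closed geodesics whose isolation is not automatic.

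The paper's proof avoids surgery entirely and works directly on the plane. For hypotheses~\ref{it:plane:selfintersecting}--\ref{it:plane:mind} it builds, exactly as in the cylinder case, a locally convex open neighbourhood of infinity with non-totally-geodesic boundary and invokes the plane version of Bangert's theorem. For hypothesis~\ref{it:plane:homological} it first reduces (via the other cases) to the situation where every closed geodesic is simple with zero average index, chooses an innermost homologically visible $c$, and shows that either a suitable based geodesic loop outside $c$ yields a convex neighbourhood of infinity (Bangert again), or $c$ lies in the interior of a compact locally convex set $K$ with $c$ the only homologically visible geodesic in $K$; a Gromoll--Meyer/Bangert--Klingenberg argument on $E|_{\Lambda K}$ then forces a second visible geodesic in $K$, a contradiction.
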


\begin{cor}
    Any complete Riemannian plane with isolated closed geodesics has zero or infinitely
    many geometrically distinct homologically visible closed geodesics.
\end{cor}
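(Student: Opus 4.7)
The corollary is an essentially immediate consequence of Theorem~\ref{thm:plane}, and specifically only of its hypothesis~\ref{it:plane:homological}. The plan is to argue by dichotomy on the number of homologically visible closed geodesics: either there is none, in which case there is nothing to prove; or there is at least one, in which case hypothesis~\ref{it:plane:homological} of Theorem~\ref{thm:plane} is satisfied, and since isolation of closed geodesics is assumed, the theorem directly yields infinitely many geometrically distinct homologically visible closed geodesics. There is no intermediate case.

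Written out, the proof is therefore a single short deduction: let $M$ be a complete Riemannian plane with isolated closed geodesics, and suppose that $M$ carries at least one homologically visible closed geodesic $\gamma$. Then the pair $(M,\gamma)$ satisfies the assumptions of Theorem~\ref{thm:plane}\ref{it:plane:homological}, hence $M$ contains infinitely many geometrically distinct homologically visible closed geodesics. Contrapositively, if $M$ admits only finitely many homologically visible closed geodesics then it admits none.

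Since the corollary is a logical repackaging of one case of the theorem, there is no genuine obstacle at this stage; the content is fully absorbed in the proof of Theorem~\ref{thm:plane}, where one would expect the hard analytic work (Bangert's min-max machinery over a convex neighborhood of infinity, local homology computations at iterated closed geodesics, and the use of average index bounds to produce new critical circles at arbitrarily high energy levels) to be carried out. The corollary itself requires no additional input beyond the statement of Theorem~\ref{thm:plane}.
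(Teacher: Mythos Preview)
Your proof is correct and matches the paper's treatment: the corollary is stated without proof because it is an immediate consequence of hypothesis~\ref{it:plane:homological} of Theorem~\ref{thm:plane}, exactly as you argue. No further argument is needed.
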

  
It is easy to give counter-examples to Theorem~\ref{thm:main}
when none of the assumptions \ref{it:contractible}--\ref{it:homological} hold
by considering embedded cylinders of revolution
\begin{equation}\label{eq:revolution}
    (\theta,z)\mapsto (r(z)\cos\theta,r(z)\sin\theta,z),
\end{equation}
for well-chosen smooth maps $r:\R\to (0,+\infty)$.
A complete cylinder may have no closed geodesic at all:
take $r'>0$.
It can have an arbitrary large finite number $k\in\N$
of homologically invisible closed geodesics:
take $r'(z)>0$ for all $z\in\R\setminus\{z_1,\ldots,z_k\}$
and $r'(z_i)=0$. 
It can also have a unique visible closed geodesic:
take $r'<0$ on $(-\infty,0)$, $r'(0)=0$ and
$r'>0$ on $(0,+\infty)$
(one can as well add to this cylinder an arbitrary large finite number
of homologically invisible closed geodesic
the same way as before).
Remark that in our examples closed geodesics
are without self-intersections and not contractible
as implied by the theorem.
Counter-examples where the theorem fails by lack of completeness
can be found as well by choosing embedded cylinders of revolution
restricting the domain of the embedding (\ref{eq:revolution})
to $(\theta,z)\in S^1\times (a,b)$ for $a,b\in\R$.
We could proceed as follows:
take an even $r:[-1,1]\to(0,+\infty)$ with
$r'>0$ on $[-1,0)$ such that $z=0$ is the only closed geodesic of
the associated compact embedded cylinder.
One can find such an $r$ by slightly modifying a Tannery surface:
a sufficient condition is
that the metric $g$ on the interior of the cylinder can be written as
\begin{equation*}
    g = \left[ \alpha + h(\cos\rho)\right]^2\ud\rho^2 
    + \sin^2 \rho\,\ud\theta^2,
\end{equation*}
for a good choice of coordinates $(\rho,\theta)\in(0,\pi)\times S^1$,
where $\alpha$ is irrational and $h:(-1,1)\to(-\alpha,\alpha)$ is a smooth
odd function (see for instance \cite[Theorem~4.13]{Bes78}).
Then extend $r$ to a smooth map $(-3,1]\to(0,+\infty)$
with $r|_{(-3,-1)} < r(-1)$, $r'<0$ on $(-3,-2)$
and $r'>0$ on $(-2,-1)$.
Then $z=-2$ and $z=0$ are the only closed geodesic
of the cylinder embedded by $r|_{(-3,1)}$ and
are both visible.

In a similar way, we can give examples of complete planes
with nothing but an arbitrary finite number of homologically
invisible closed geodesics by considering surfaces of revolution
(\ref{eq:revolution}) parametrized by $\R/2\pi\Z\times [0,+\infty)$
with $r:[0,+\infty)\to [0,+\infty)$ being increasing and
smooth on $(0,+\infty)$ with $r(0)=0$ and $r'(z)\to +\infty$
when $z\to 0$ in a suitable way (\emph{i.e.} so that the surface
is smooth at the origin).
Then, as above, we get homologically invisible closed geodesics
on the inflexion points of $r$, and nowhere else.

We say that $C_-\subset M$ (resp. $C_+$) is a neighborhood of $-\infty$
(resp. of $+\infty$) if $C_-$ contains $S^1\times (-\infty,a)$
for some $a\in\R$ (resp. $S^1\times (b,+\infty)$ for some $b\in\R$)
for an arbitrarily fixed identification of $M$ with $S^1\times\R$.
In order to prove Theorem~\ref{thm:main}, we will extensively
use the following theorem due to Bangert:

\begin{thm}[{\cite[Theorem~3, Remark~2]{Ban80}}]\label{thm:Bangert}
    Let $M$ be a complete Riemannian cylinder with isolated closed geodesics
    and suppose there
    exists disjoint locally convex open neighborhoods $C_-$ and $C_+$
    of $-\infty$ and $+\infty$ respectively such that the boundaries
    $\partial C_\pm$ are not totally geodesic.
    Then $M$ contains infinitely many homologically visible closed geodesics
    intersecting $M\setminus (C_-\cup C_+)$ and at least one
    without self-intersections.
\end{thm}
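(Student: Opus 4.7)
My plan is to combine energy minimization in free homotopy classes with a min-max construction that exploits the non-totally-geodesic boundary hypothesis. First, for each $n \in \Z$ I would minimize the energy functional on the space of loops with winding number $n$ around the cylinder. The main analytic obstacle is that $M$ is non-compact, so a minimizing sequence could in principle drift out into $C_-$ or $C_+$. Local convexity of $C_\pm$ rules this out, because any loop penetrating deeply into a locally convex region can be strictly shortened, without leaving its homotopy class, by pushing its excursion outward toward $\partial C_\pm$. Thus minimizing sequences concentrate in a compact core and standard compactness arguments yield a closed geodesic $c_n$ in each non-trivial class. Each such $c_n$ must meet $M \setminus (C_- \cup C_+)$, since a loop contained entirely in a locally convex end would be null-homotopic there.

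To produce the required simple closed geodesic, I would take $n = 1$: the minimizer $c_1$ must be without self-intersections, because a self-intersection splits $c_1$ into two sub-loops with winding numbers summing to $1$, one of which lies in a non-trivial class and carries strictly less energy, contradicting minimality. For the infinite multiplicity statement, two cases arise. If infinitely many of the $c_n$ are geometrically distinct we are done. Otherwise, all $c_n$ reduce to iterates of $c := c_1$; in this rigid case I would run a min-max between $c$ and ``boundary'' sweepouts pushed into $C_\pm$. The hypothesis that $\partial C_+$ is not totally geodesic supplies a point of strict convexity there, allowing a family of loops concentrated near that point to be continuously contracted while crossing a positive energy barrier. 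The resulting mountain-pass critical value is distinct from the levels of the iterates $c^k$, yielding a new closed geodesic; iterating the procedure in higher homotopy classes then produces infinitely many. All geodesics obtained this way are homologically visible, since both minima and mountain-pass-type critical points carry non-zero local homology by standard Morse-theoretic principles, and isolatedness of the closed geodesics prevents these contributions from being spread out along continuous families.

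The main obstacle I anticipate is the last step: certifying that the min-max in the degenerate case produces genuinely new critical points rather than rediscovering iterates of $c$, and that these new geodesics intersect $M \setminus (C_- \cup C_+)$. The non-totally-geodesic hypothesis on $\partial C_\pm$ is the essential ingredient precisely here, since without it one can construct rigid examples (such as flat cylinders capped convexly and flatly) admitting only a single closed geodesic. Tracking the min-max classes so that they remain non-trivial modulo the $S^1$-action by reparameterization, and arranging that the associated sweepouts necessarily cross $M \setminus (C_- \cup C_+)$, is the delicate technical point of the argument.
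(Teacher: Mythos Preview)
Your minimization step rests on a topological error: the ends $C_\pm$ are themselves open cylinders $\simeq S^1\times(b,+\infty)$, so a loop of winding number $n\neq 0$ lying entirely inside $C_+$ is \emph{not} null-homotopic there. Moreover, local convexity acts in the opposite direction from what you claim: it makes $\Lambda C_\pm$ \emph{forward}-invariant under the $(-\nabla E)$-flow, so loops inside $C_\pm$ stay inside, they are not pushed out toward $\partial C_\pm$. A minimizing sequence in the class $[\gamma]=n$ can therefore live entirely in $C_+$ and drift to infinity; the infimum need not be attained (think of a surface of revolution whose radius tends to $0$ at $+\infty$), or may be attained by a closed geodesic contained in $C_+$, which does not meet $M\setminus(C_-\cup C_+)$ as the theorem requires. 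So the scheme ``minimize in each free homotopy class'' does not get off the ground, and your argument for the simple closed geodesic collapses with it.

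The decisive ingredient you are missing, and which the paper's proof is built around, is Bangert's separate result (Theorem~\ref{thm:mountainpass}): on a surface, every isolated closed geodesic, once iterated past some $m_0$, has a neighborhood whose sublevel set is \emph{connected}, so it is not a mountain pass. The paper argues by contradiction, assuming finitely many prime geodesics $\gamma_1,\dots,\gamma_k$ with visible iterates meet the compact core $K$, and runs a min-max not between a minimizer and a sweepout but over homotopies of loops from $P_n^-\subset\Lambda C_-$ to $P_n^+\subset\Lambda C_+$ in a fixed large winding class $n$. Compactness of $\partial C_\pm$ forces any such homotopy to pass through a loop in $K$ of energy at least $A:=\max_i E(\gamma_i^{m_0})$, so the min-max value satisfies $\kappa_0\geq A$; a deformation argument then demands a mountain-pass critical orbit in $K$ at level $\kappa_0$. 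But every closed geodesic in $K$ at energy $\geq A$ is either a large iterate of some $\gamma_i$ (hence not a mountain pass by Bangert's theorem) or has all iterates invisible (hence never a mountain pass), yielding the contradiction. The non-totally-geodesic hypothesis on $\partial C_\pm$ is used only to ensure the critical orbits at level $\kappa_0$ lie in the interior of $K$, not to create the energy barrier you envisage.
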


Since Bangert did not give the precise proof of that statement,
for the sake of completeness we give a comprehensive proof in the paper.
The proof of Theorem~\ref{thm:plane} is quite similar and
relies extensively on the analogous theorem of Bangert when
$M$ is a plane with isolated closed geodesics:
if there exists an open neighborhood of infinity $C\neq M$
with a boundary $\partial C$ which is not totally geodesic,
$M$ contains infinitely many homologically visible closed geodesics
\cite[Theorem~3]{Ban80}.
These two theorems
were originally used by Bangert to prove that any complete Riemannian
plane of finite area has infinitely many closed geodesics.

In fact, Theorem~\ref{thm:main} extends \emph{verbatim} to the case where
$M$ is a complete reversible Finsler manifold
as we will essentially use variational properties of geodesics
in our proof with no concern for geometric notion
specific to Riemannian manifold.
However, nothing can be said concerning the more general case
of a complete (asymmetrical) Finsler manifold.
The major issue is that, in the asymmetrical case,
a closed subset of $M$ which is bounded by a geodesic
is not locally convex. 
In this direction, we point out that the related question of whether or not
infinitely many closed geodesics exist on every irreversible Finsler cylinder
of finite area is still open \cite[Question~2.3.2]{BM19}.

In order to put these results in perspective, we recall some
known results concerning existence of closed geodesics on complete non-compact
Riemannian manifolds.
In 1978, Thorbergsson proved the existence of closed geodesics on a complete
Riemannian manifold $M$ if it contains a convex compact set which
is not homotopically trivial or if $M$ has a non-negative sectional
curvature outside some compact set \cite{Tho78}.
In the 1990s, Benci and Giannoni proved that
any complete $d$-dimensional Riemannian manifold such that
the limit superior of its sectional curvature at infinity is non-positive
and the homology of its free loop space is non-trivial in some degree larger
than $2d$ possesses a closed geodesic
\cite{BG91, BG92}.
In 2017, Asselle and Mazzucchelli showed the existence of infinitely
many closed geodesics for complete $d$-dimensional Riemannian manifolds which have
no close conjugate points at infinity and a free loop space
with unbounded Betti numbers in degrees larger than $d$ \cite{AM17}.
They also improved the result of Benci and Giannoni by replacing the asymptotic
curvature assumption by an assumption on the conjugated points at infinity
and by improving the bound on the homology of the free loop space.
However, the existence of one closed geodesic in any complete Riemannian manifold
of finite volume is still an open problem
(see for instance the following recent review of the subject
\cite{BM19}).
\subsection*{Organization of the paper}
In Section~\ref{se:preliminaries}, we fix notation and
recall results of the variational theory of geodesics that
we will need.
In Section~\ref{se:Bangert}, we give a comprehensive proof of Theorem~\ref{thm:Bangert}
of Bangert.
In Section~\ref{se:contractible}, we prove Theorem~\ref{thm:main}
when hypothesis \ref{it:contractible}, \ref{it:selfintersecting}
or \ref{it:intersecting} is assumed.
In Section~\ref{se:mind}, we prove Theorem~\ref{thm:main}
when hypothesis \ref{it:mind} is assumed.
In Section~\ref{se:homological}, we prove the last case
of Theorem~\ref{thm:main}.
In Section~\ref{se:plane}, we prove Theorem~\ref{thm:plane}.

\subsection*{Acknowledgments}
This work started while the first author was visiting the Ruhr-Universität Bochum
(Germany) in October 2019. The first author wishes to thank Alberto Abbondandolo and
Stefan Suhr for the invitation, and the Ruhr-Universität Bochum for providing
an excellent working environment.
The first author is also grateful to his advisor Marco Mazzucchelli
who introduced him to the result conjectured by Alberto Abbondandolo.
To the latter the second author expresses his gratitude for his continuing help
and advice. The second author is partially supported by the SFB/TRR 191 “Symplectic Structures in Geometry, Algebra and Dynamics”, funded by the Deutsche Forschungsgemeinschaft.

\section{Preliminaries}\label{se:preliminaries}

In this section, we recall some results of Riemannian geometry
that we will use in our proofs and fix some notation.
For the extension of these notions to the Finsler case,
the reader may for instance look at \cite[Section~2]{CJ13}.

\subsection{The energy functional}

Given a complete Riemannian manifold with boundary $M$, we denote by
$\Lambda M$ the space of $H^1$-maps $S^1\to M$.
In fact, if one wants to avoid analytic questions,
we can always reduce our space to a finite-dimensional
manifold of broken geodesics.
For $\gamma\in\Lambda M$ and $m\in\N^*$,
the iterated loop $\gamma^m\in\Lambda M$ 
is defined by $t\mapsto \gamma(mt)$.
A geodesic is an immersed path $\gamma:\R\to M$ such that
\begin{equation*}
    \nabla_{\dot{\gamma}}\dot{\gamma} = 0,
\end{equation*}
where $\nabla$ denotes the Levi-Civita connection of the metric
and $\dot{\gamma}$ stands for the derivative of $\gamma$.
Therefore, in our convention, geodesics have constant speed.
A closed geodesic is a geodesic $\gamma$ which is periodic:
$\gamma(t+1)=\gamma(t)$ so that $\gamma\in\Lambda M$.
Closed geodesics of $M$ are the critical points with non-zero critical value of
the energy functional $E:\Lambda M\to [0,+\infty)$,
\begin{equation*}
    E(\gamma) := \int_{S^1} g_{\gamma}(\dot{\gamma},\dot{\gamma})\ud t,
    \quad \forall \gamma\in\Lambda M.
\end{equation*}
The energy functional $E$ is $C^2$.
If $M$ is a locally convex compact manifold (possibly with boundary),
$E$ also satisfies the Palais-Smale condition and the $(-\nabla E)$-flow
is defined for all time $t\geq 0$.
We notice that every closed geodesic lies on a critical circle
$S^1\cdot\gamma$, where $S^1$ acts on $\Lambda M$ by $t\cdot\gamma :=
\gamma(t+\cdot)$.
In our study we assume that $E$ has only isolated critical circles
(except for the constant loops which have zero value).
Two closed geodesics $c_1$ and $c_2$ are said to be geometrically distinct if 
they do not have the same image in $M$.

\subsection{Finite-dimensional approximation of the loop space}

Morse's finite-dimen\-sional approximation of the curve space over $M$, as
presented by Bangert in
\cite{Ban80} consist of the following data: an open set $\mathcal{O}\subset M$, an energy bound $\kappa>0$ and a
parameter $j\in\N$ satisfying $\frac{1}{j} <
\frac{\varepsilon^2}{\kappa}$
where $\varepsilon > 0$ is smaller than the injectivity radius on
$\mathcal{O}$. The positivity of $\varepsilon$ will be fulfilled if
for instance $\mathcal{O}$
has compact closure, as will be the case in our considerations. The
finite-dimensional approximation $\Omega=\Omega(\mathcal{O},\kappa,j)$ is
constructed as follows: it is the set of all curves $\gamma\in\Lambda M$ such
that $E(\gamma)<\kappa$, $\gamma(i/j)\in\mathcal{O}$ and such that
$\gamma|_{[i/j,(i+1)/j]}$ is a geodesic of length less than $\varepsilon$
for $0\leq i\leq j-1$.

Let $\Omega$ be a finite-dimensional approximation of $\Lambda M$ and $C\subset
M$ a locally convex set with compact boundary such that $C \subset
\mathcal{O}$. By the local convexity of $C$, there exists an $\varepsilon>0$
such that for two points $p,q \in
C$ with Riemannian distance $d(p,q)<\varepsilon$,
there exists a unique geodesic of length $=d(p,q)$ joining $p$ and $q$, and contained
entirely in $C$. The negative gradient of the restriction of
the energy functional to $\Omega$ is given by
\begin{equation*}
    -\nabla E|_{\Omega}(\gamma) = -2
    \big(\dot{\gamma}_1(1/j)-\dot{\gamma}_2(1/j),\ldots,
    \dot{\gamma}_{j-1}((j-1)/j)-\dot{\gamma}_{j}((j-1)/j)\big)
\end{equation*}
for $\gamma \in \Omega$, where 
$\gamma_i = \gamma|_{[(i-1)/j,i/j]}$ for $1 \le i \le j$ (see
\cite[p.~252]{GM68}). Now from our choice of $j$
and Cauchy-Schwarz inequality, we get
\begin{align*}
    d(\gamma((i-1)/j),\gamma(i/j))^2 \le
    \frac{1}{j}E(\gamma|_{((i-1)/j,i/j)}) \le
    \frac{\varepsilon^2}{\kappa}\kappa=\varepsilon^2
\end{align*}
and consequently by local convexity of $C$, the negative gradient flow of the 
finite\-di\-men\-sio\-nal approximation of the energy functional respects $\Omega$.

\subsection{Index of a closed geodesic}

The index of a closed geodesic $\gamma$ is the Morse index of
$E$:
\begin{equation*}
    \ind(\gamma) := \ind(E,\gamma).
\end{equation*}
It is always finite.
The behavior of this index under iteration
$k\mapsto \ind(\gamma^k)$ was extensively studied by Bott
in \cite{Bot56}.
We simply recall that
\begin{equation}\label{eq:Bott}
    \ind(\gamma^k) \geq k\mind(\gamma) - \dim(M) +1,
    \quad k\in\N,
\end{equation}
where $\mind(\gamma)\geq 0$ is the average index of $\gamma$ defined by
\begin{equation*}
    \mind(\gamma) := \lim_{k\to\infty} \frac{\ind(\gamma^k)}{k}.
\end{equation*}
Let $p\in M$ and $\Omega_p M\subset \Lambda M$ be the set of
loops based at $p$, that is $H^1$-paths $\gamma:[0,1]\to M$ such that
$\gamma(0)=\gamma(1)=p$.
Given a closed geodesic $\gamma\in\Lambda M$,
we denote by $\ind_\Omega(\gamma)\in\N$ the Morse index
\begin{equation*}
    \ind_\Omega (\gamma) := \ind\left(E|_{\Omega_{\gamma(0)}M},\gamma\right).
\end{equation*}
By inclusion, $\ind_\Omega (\gamma)\leq \ind(\gamma)$.
In fact, we have the concavity inequality
\cite[Eq.~(1.5)]{BTZ}:
\begin{equation}\label{eq:conc}
    \ind(\gamma) - \dim(M) +1 \leq
    \ind_\Omega(\gamma)\leq
    \ind(\gamma).
\end{equation}

A Jacobi field of the geodesic path $\gamma$ is a smooth map
$J:\R\to\gamma^*TM$, satisfying
\begin{equation*}
    J(t)\in T_{\gamma(t)}M, \quad \forall t\in\R \quad \text{and} \quad
    \nabla_{\dot{\gamma}}^2 J = R(\dot{\gamma},J)\dot{\gamma},
\end{equation*}
where $R$ denotes the Riemann tensor.
Let $\mu(t)$ be the number of linearly independent Jacobi fields of $\gamma$
such that $J(0)=J(t)=0$; the Morse index theorem states that
\begin{equation}\label{eq:MorseIndexThm}
    \ind_\Omega (\gamma) = \sum_{0<t<1} \mu(t).
\end{equation}

The local homology of a critical circle $S^1\cdot\gamma$ is by definition
\begin{equation*}
    \lochom_*(S^1\cdot\gamma) := H_* (\{ E < E(\gamma) \} \cup S^1\cdot \gamma,
    \{ E < E(\gamma) \}),
\end{equation*}
where the set $\{ E<E(\gamma)\}$ is
$\{\delta\in\Lambda M\ |\ E(\delta)<E(\gamma)\}$,
and $H_*$ denotes the singular homology with integral coefficients.
A closed geodesic is said to be homologically visible if
$\lochom_*(S^1\cdot\gamma)\neq 0$ and is said to be homologically invisible
otherwise.
We will be interested in the properties of the local homology
$\lochom_*(S^1\cdot \gamma)$
only in the case where $\gamma$ is a closed geodesic of
average index $\mind(\gamma)=0$ and whose image $\gamma(S^1)$ lies
in the interior of $M$
($\mind(\gamma)=0$ is equivalent to the fact that $\ind(\gamma^m)$
vanishes for all $m\geq 1$).
Let $\gamma\in\Lambda M$ be such a closed geodesic.
Given $m\in\N$, we denote by $\psi_m : \Lambda M\to\Lambda M$ 
the iteration map $\psi_m(\delta):=\delta^m$.
According to a theorem of Gromoll-Meyer
\cite[Theorem~3]{GM69a},
the local homology of $\lochom_d(S^1\cdot \gamma)$ is zero in degrees 
$d\geq 2\dim M$ and
there exists infinitely many positive integers
$m$ such that the induced map in homology
\begin{equation}\label{eq:GM}
    (\psi_m)_* : \lochom_*(S^1\cdot \gamma) \to \lochom_*(S^1\cdot \gamma^m)
\end{equation}
is an isomorphism.
On the other hand, a theorem of Bangert-Klingenberg
\cite[Corollary~1]{BK83} states that 
there exists $m_0\in\N$ above which for all
$m\geq m_0$, there exists $e_m > m^2 E(\gamma)$ such that the composition
\begin{equation}\label{eq:BK}
    \lochom_*(S^1\cdot \gamma) \xrightarrow{(\psi_m)_*}
    \lochom_*(S^1\cdot \gamma^m) \xrightarrow{\textup{inc}_*}
    H_*\left(\big\{ E < e_m\big\},
    \left\{ E< m^2 E(\gamma) \right\} \right)
\end{equation}
is zero.

\section{Proof of Bangert theorem}\label{se:Bangert}

A closed geodesic $\gamma$ is a mountain pass if, for all neighborhoods
$U\subset\Lambda M$ of $S^1\cdot\gamma$, $U\cap E^{-1}([0,E(\gamma))$ is not connected.
For the proof of Theorem \ref{thm:Bangert}, we need the following statement,
which tells us that isolated closed geodesics cannot remain mountain pass
critical points of the energy functional when sufficiently iterated. A
geometric proof is given by Bangert \cite{Ban80}.

\begin{thm}[{\cite[Theorem~2]{Ban80}}]
    \label{thm:mountainpass}
    Let $\gamma$ be an isolated closed geodesic on $M$, where $\dim M=2$. Then there
    exists an integer $m_\gamma\in\N$ such that the following holds: For all
    integer $m\in\N$ with $m\ge m_\gamma$, there is a neighborhood $U$ of $S^1
    \cdot \gamma$ in $\Lambda M$ such that $U \cap E^{-1}([0,E(\gamma^m)))$ is
    connected.
\end{thm}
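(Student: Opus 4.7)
I would split the argument according to the average index $\mind(\gamma)$.

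If $\mind(\gamma) > 0$, Bott's inequality \eqref{eq:Bott} gives $\ind(\gamma^m) \geq m\mind(\gamma) - 1 \to +\infty$, so $\ind(\gamma^m) \geq 2$ for all $m$ sufficiently large. For an isolated critical circle of Morse index at least $2$, the generalized Morse lemma along the critical orbit shows that $\{E<E(\gamma^m)\}\cap U$ deformation-retracts onto a neighborhood of an unstable ``wedge'' of fiber dimension $\geq 2$. The complement of the origin in such a wedge is path-connected, so $\{E<E(\gamma^m)\}\cap U$ is connected and $\gamma^m$ is not a mountain pass critical point.

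The substantive case is $\mind(\gamma) = 0$, where the Morse index stays bounded and the previous argument fails; this is where $\dim M = 2$ enters essentially. I would first reduce to the situation of an orientable tubular neighborhood of $\gamma$ (replacing $\gamma$ by $\gamma^2$ if needed), and introduce Fermi coordinates in this tube. Since $\dim M = 2$, the normal direction to $\gamma$ is one-dimensional: any loop close to $\gamma^m$ inside the tube is encoded by a scalar normal displacement function. Passing to the finite-dimensional broken-geodesic approximation $\Omega(\mathcal{O},\kappa,j)$ of Section~\ref{se:preliminaries} with sufficiently many breakpoints, such a loop is represented by an $(mj)$-tuple of scalar normal coordinates. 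The core geometric step is to show that, for $m_\gamma$ large enough, any subcritical loop $\delta$ in a small neighborhood $U$ of $S^1\cdot\gamma^m$ can be connected within $\{E<E(\gamma^m)\}\cap U$ to a fixed reference loop on one prescribed side of $\gamma^m$. I would implement this by a discrete azimuthal interpolation on the $(mj)$-tuple of normal coordinates, combined with the negative gradient flow of $E|_\Omega$ (which respects the tube by local convexity, see Section~\ref{se:preliminaries}) to preserve the energy margin; the two-dimensionality guarantees there are only two sides to $S^1\cdot\gamma^m$ in the tube, and a second such deformation connects them around the orbit, yielding the connectedness of $U\cap\{E<E(\gamma^m)\}$.

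The principal obstacle is the quantitative energy control in the case $\mind(\gamma) = 0$: one must verify that the energy stays strictly below $E(\gamma^m)$ throughout the interpolation. This is dictated by the oscillatory behavior of Jacobi fields along $\gamma^m$, which under our hypotheses (isolatedness of $\gamma$ together with $\mind(\gamma)=0$) is governed by rational rotation of the linearized Poincaré map. The size of $m_\gamma$ is determined precisely by this oscillation count, and the Jacobi-field analysis is the technical heart of Bangert's geometric argument.
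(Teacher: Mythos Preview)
The paper does not contain a proof of Theorem~\ref{thm:mountainpass}; it merely quotes the statement from \cite{Ban80} and refers the reader there (``A geometric proof is given by Bangert \cite{Ban80}''). The only additional content the paper supplies is the one-line observation immediately following the statement: if $\gamma$ and all its iterates are homologically invisible, then by the Gromoll--Meyer isomorphism $\lochom_*(S^1\cdot\gamma)\simeq H_*(U,U\cap\{E<E(\gamma)\})$ for a connected neighborhood $U$, so $U\cap\{E<E(\gamma)\}$ is automatically connected and one may take $m_\gamma=1$. There is therefore no ``paper's own proof'' to compare your attempt against.

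That said, your outline is broadly in the spirit of Bangert's original argument. The dichotomy on $\mind(\gamma)$ is the right organizing principle, and your treatment of the case $\mind(\gamma)>0$ is essentially correct (once $\ind(\gamma^m)\geq 2$, the local homology vanishes in degrees $0$ and $1$, and the long exact sequence of the pair $(U,U\cap\{E<E(\gamma^m)\})$ forces the sublevel set to be connected). For $\mind(\gamma)=0$ you have correctly identified the crucial ingredients: the one-dimensional normal bundle (from $\dim M=2$), Fermi-type coordinates in a tube, and a hands-on homotopy in the broken-geodesic approximation. What remains a genuine sketch in your write-up is precisely the hard part, namely the energy estimate guaranteeing that the interpolation stays strictly below $E(\gamma^m)$; this is where Bangert's argument does real work, and your proposal acknowledges but does not carry it out.
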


According to Gromoll-Meyer \cite{GM68}, given
an isolated closed geodesic $\gamma$, there exists a connected
neighborhood $U\subset\Lambda M$ of the critical circle $S^1\cdot\gamma$
such that
\begin{equation*}
    \lochom_*(S^1\cdot\gamma)\simeq H_*\left(U,U\cap E^{-1}([0,E(\gamma))\right).
\end{equation*}
If $\gamma$ and all its iterates are homologically invisible,
Theorem~\ref{thm:mountainpass} is thus true for $m_\gamma = 1$.

\begin{proof}[Proof of Theorem~\ref{thm:Bangert}] Assume there are only
    finitely many prime closed geodesics 
    $\gamma_1,\ldots,\gamma_k$ which have homologically visible
    iterates and which intersect
    $M\setminus (C_- \cup C_+)$. We will now derive a contradiction from this
    assumption. We will define a suitable
    finite-dimensional approximation $\Omega=\Omega(\mathcal{O},\kappa,j)$.
    Now as the statement of Theorem~\ref{thm:mountainpass} remains true in a
    finite-dimensional approximation, we get that there exists $m_0\in\N$ such
    that for all integers $m\ge m_0$ and for all $i\in\{1,\ldots,k\}$ the following
    holds:
    \begin{enumerate}[i)]
        \item \label{it:existsU}There exists a neighborhood $U$ of
            $S^1\cdot\gamma_i^m$ in $\Omega$
            such that $U \cap E^{-1}([0,E(\gamma_i^m)))$ is connected.
    \end{enumerate}
    Set $A:=\max\{E(\gamma_i^{m_0})\,|\, i\in\{1,\ldots,k\}\}$,
    and notice that $A$ is larger than the energy of a closed geodesic
    of mountain pass type.
    We fix an identification of $\pi_1(M)$ with $\Z$ and denote by
    $[\gamma]\in\Z$ the class of a loop $\gamma\in\Lambda M$.
    We define the following sets of curves:
    \begin{align*}
        P_j^\pm := \{\gamma \in \Omega\,|\, \gamma(S^1) \subset
        \text{int}(C_\pm),\ [\gamma]=j \}.
    \end{align*}
    In the following for each $U,V\subset M$, we will denote
    \begin{equation*}
        \dist(U,V) := \inf_{x\in U,\;y\in V} d(x,y).
    \end{equation*}
    Choose $\delta>0$. Then
    there exists an $n\in\N$ such that for any curve $\gamma\in P_n^\pm$ and
    $\dist(\gamma(S^1),M\setminus(C_-\cup C_+))<\delta$ it holds that
    $E(\gamma)\ge A$. We can now say how exactly the finite-dimensional
    approximation has to be chosen:
    \begin{itemize}
        \item Choose a $\kappa>0$ large enough such that there exists a
            homotopy $h:[0,1]\to E^{-1}([0,\kappa))$ in $\Omega$ from $h_0 \in
            P_n^-$ to $h_1 \in P_n^+$ with 
            \begin{equation*}
                \dist\left(h_t(S^1),M\setminus (C_- \cup
                C_+)\right) < \delta,
                \quad \forall t\in[0,1].
            \end{equation*}
        \item Set $\mathcal{O}:=\{p\in M \,|\, \dist(p,M\setminus(C_-\cup
            C_+))<R\}$, where $R>2\kappa^\frac{1}{2}+\delta$ such that $\mathcal{O}$ contains
            $\gamma_1,\dots,\gamma_k$.
            \item Choose $k$ such that the $(-\nabla E)$-flow of the
                finite-dimensional approximation respects $C_\pm$, as described
                above.
        \end{itemize}
        A technical issue is given by the fact that the gradient flow of
        $-\nabla E$ may not be defined for all times as the sublevel sets of
        $E|_\Omega$ are not compact. Ultimately we are only going to be
        interested in curves intersecting the compact set $M\setminus(C_-\cup
        C_+)$, i.e. the subset
        \begin{align*}
            K:=\{\gamma\in\Omega \,|\, \gamma(S^1)\cap (M\setminus(C_-\cup
            C_+))\neq\varnothing\}
        \end{align*}
        of $\Omega$. We introduce a smooth function $g:\Omega\to[0,1]$ with the property that
        \begin{equation*}
            \begin{cases}
                g(\gamma) = 1 &\text{if } \dist(\gamma(S^1), K)\le \frac 12
                \kappa^{\frac 12}, \\
                g(\gamma) = 0 &\text{if } \dist(\gamma(S^1), K)> \frac 32 \kappa^{\frac 12}.
            \end{cases}
        \end{equation*}
        Then the flow $\phi_t$ of $-g\nabla E$ is defined for all times
        $t\ge 0$ and coincides with the negative gradient flow for curves in
        $K$. Two crucial observations about the set $K$ are the following:
        firstly, for all $\bar\kappa<\kappa$ the set $K \cap
        E^{-1}([0,\bar\kappa])$ is compact. Secondly, if $\phi_t(\gamma) \in K$
        for some $\gamma\in\Omega$ and some time $t\ge 0$, we already have
        $\gamma\in K$ as the flow $\phi_t$ respects the convex sets $C_\pm$.
        From this it follows:
        \begin{enumerate}[i)]
            \setcounter{enumi}{1}
        \item\label{it:existsT} Let $0<\kappa_0<\kappa_0+\varepsilon<\kappa$.
            Let $V$ denote a neighborhood of the closed geodesics in $K$ of
            energy $\kappa_0$. Suppose there is no closed geodesic in $K \cap
            E^{-1}((\kappa_0,\kappa_0+\varepsilon])$. Then there exists a time
            $\tau>0$, such that
            \begin{align*}
                \phi_\tau\left(E^{-1}\left([0,\kappa_0+\varepsilon]\right)\right)\cap
                K \subset
                E^{-1}([0,\kappa_0)) \cup V.
            \end{align*}
    \end{enumerate}

This is just the deformation lemma; for a proof see for instance
\cite[Lemma~3.4]{Str90}. We are now set to complete the proof of the theorem.
Define the set of homotopies 
\begin{align*} \Pi := \left\{\beta \,|\,
    \beta:[0,1]\to\Omega,\, \beta_0\in P_n^-, \, \beta_1\in P_n^+\right\}.
\end{align*} 
Note that $\Pi$ is not empty, as $h\in\Pi$. Furthermore,
$\phi_t\circ\beta \in \Pi$ for all $\beta\in\Pi$ and all $t\ge 0$ as the flow
respects the convex sets $C_\pm$ and therefore $\phi_t(\beta_0) \in P_n^-$
and $\phi_t(\beta_1) \in P_n^+$ for all $t\ge 0$. Define now 
\begin{align*}
    \kappa_0 := 
    \inf\limits_{\beta\in\Pi}\max\limits_{\substack{t\in[0,1] \\ \beta_t\in K}}
E(\beta_t) \, .  
\end{align*} 

By definition of $\kappa$, one has $\kappa_0<\kappa$.
For every $\beta\in\Pi$ for time
$t_0:=\min\{t\in[0,1]\,|\,\beta_t\notin P_n^-\}$ it holds that $\beta_{t_0}\in
K$ and $E(\beta_{t_0})\ge A$ (as $E$ and $\beta$ are continuous and there
exists a sequence $(t_k) \nearrow t_0$ such that $\beta_{t_k}\in P_n^-$
and $\dist(\beta_{t_k},M\setminus(C_- \cup C_+))<\delta$). Consequently, we get
$\kappa_0\ge A$. 
Since $\kappa_0<\kappa$, for $\varepsilon>0$ small enough,
the subset $K\cap E^{-1}([0,\kappa_0+\varepsilon])$ is compact
and there are only finitely many $S^1$-orbits of closed geodesics inside
(we assumed every orbit to be isolated).
Let $\{S^1\cdot d_j\}_{1\le j\le l}$ denote the critical
circles of energy $\kappa_0$ in $K$.
By definition of $A$ and by using \ref{it:existsU}) (when $d_j$ is homologically visible),
there exist disjoint neighborhoods
$U_j$ of the $S^1\cdot d_j$'s such that $U_j \cap E^{-1}([0,\kappa_0))$ is connected
for all $j$.
Since $\partial C_\pm$ are not totally geodesic, we know that the $d_j$'s are not
contained in $\partial K$ and we can assume that $U_j \subset K$.
Now because
there are only finitely many closed geodesics in
$K\cap E^{-1}([0,\kappa_0+\varepsilon])$
for $\varepsilon>0$ small enough,
one can take $\varepsilon>0$ such that there is no
closed geodesic in $K \cap E^{-1}((\kappa_0,\kappa_0+\varepsilon])$. By the
definition of $\kappa_0$ there exists a homotopy $\beta\in\Pi$ satisfying
$E(\beta_t)\le \kappa_0+\varepsilon$
for all $t\in[0,1]$ such that $\beta_t\in K$.
Choose
neighborhoods $V_j$ of $S^1\cdot d_j$ such that $\overline{V_j}\subset
\text{int}(U_j)$ and use property \ref{it:existsT}) on the neighborhood
$V:=\bigcup_{k=1}^l V_j$ of closed geodesics of energy $\kappa_0$ in $K$ to
obtain a $\tau>0$ with the property that for the homotopy 
$\phi_\tau\circ\beta$ we have that
$(\phi_\tau\circ\beta)_t \in K$ implies $E((\phi_\tau\circ\beta)_t)<\kappa_0$ or
$(\phi_\tau\circ\beta)_t \in V$. Now $(\phi_\tau\circ\beta)^{-1}(V) = \bigcup_{r=1}^m
(t_r,t'_r)$ and by our choice of the $V_j$ we have
$(\phi_\tau\circ\beta)([t_r,t'_r])\subset U_j$ and for the endpoints
$(\phi_\tau\circ\beta)_{t_r},(\phi_\tau\circ\beta)_{t'_r}\in U_j \cap
E^{-1}([0,\kappa_0))$ for some $j\in\{1,\ldots,l\}$ (which is why we applied
\ref{it:existsT}) only to $V$ and not to $\bigcup_{j=1}^l U_j$ directly). Now,
by using \ref{it:existsU}) if $d_j$ is homologically visible,
we know that $U_j \cap E^{-1}([0,\kappa_0))$ is connected
and consequently we can replace $(\phi_\tau\circ\beta)|_{[t_r,t'_r]}$ by a path in
$E^{-1}([0,\kappa_0))$ with the same endpoints.  After $m$ steps we obtain a homotopy
$\hat\beta:[0,1]\to\Omega$ such that
$E(\hat\beta_t)<\kappa_0$ when $\hat\beta_t\in K$.
Since $(\phi_\tau\circ\beta)_0,(\phi_\tau\circ\beta)_1
\notin K$ it follows that $(\phi_\tau\circ\beta)_0,(\phi_\tau\circ\beta)_1 \notin
\bigcup_{j=1}^l U_j$ and therefore $\hat\beta_0 \in P_n^-$ ,$\hat\beta_1 \in
P_n^+$, hence $\hat\beta\in\Pi$. This contradicts the minimality of $\kappa_0$.
\end{proof}

\section{Contractible and intersecting closed geodesics}\label{se:contractible}

Here $M$ still denotes a complete Riemannian cylinder.
We assume that there exists a contractible closed geodesic
$c\in\Lambda M$.
Let us consider the unbounded components of $M\setminus c(S^1)$.
Since $c(S^1)$ is bounded, there are at most two distinct unbounded components.
If there are two distinct unbounded components $C_-$ and $C_+$,
one can assume that $C_-$ is a neighborhood of $-\infty$ and
$C_+$ is a neighborhood of $+\infty$.
By $C_\pm$ we will mean any of these two neighborhoods.
Then $\partial C_\pm$ is a broken geodesic with angles strictly less than $\pi$
inside $C_\pm$
since $c$ is a closed geodesic
(see Figure~\ref{fig:label} for an instance of $\partial C_+$).
Hence $C_\pm$ is locally convex.
Moreover if the boundary were totally geodesic,
then $\partial C_\pm$ would be parametrised by $c$
which is impossible for $c$ is contractible.
We can thus apply Theorem~\ref{thm:Bangert} in this case.

\begin{figure}
\begin{center}
\begin{small}
\def\svgwidth{0.6\textwidth}
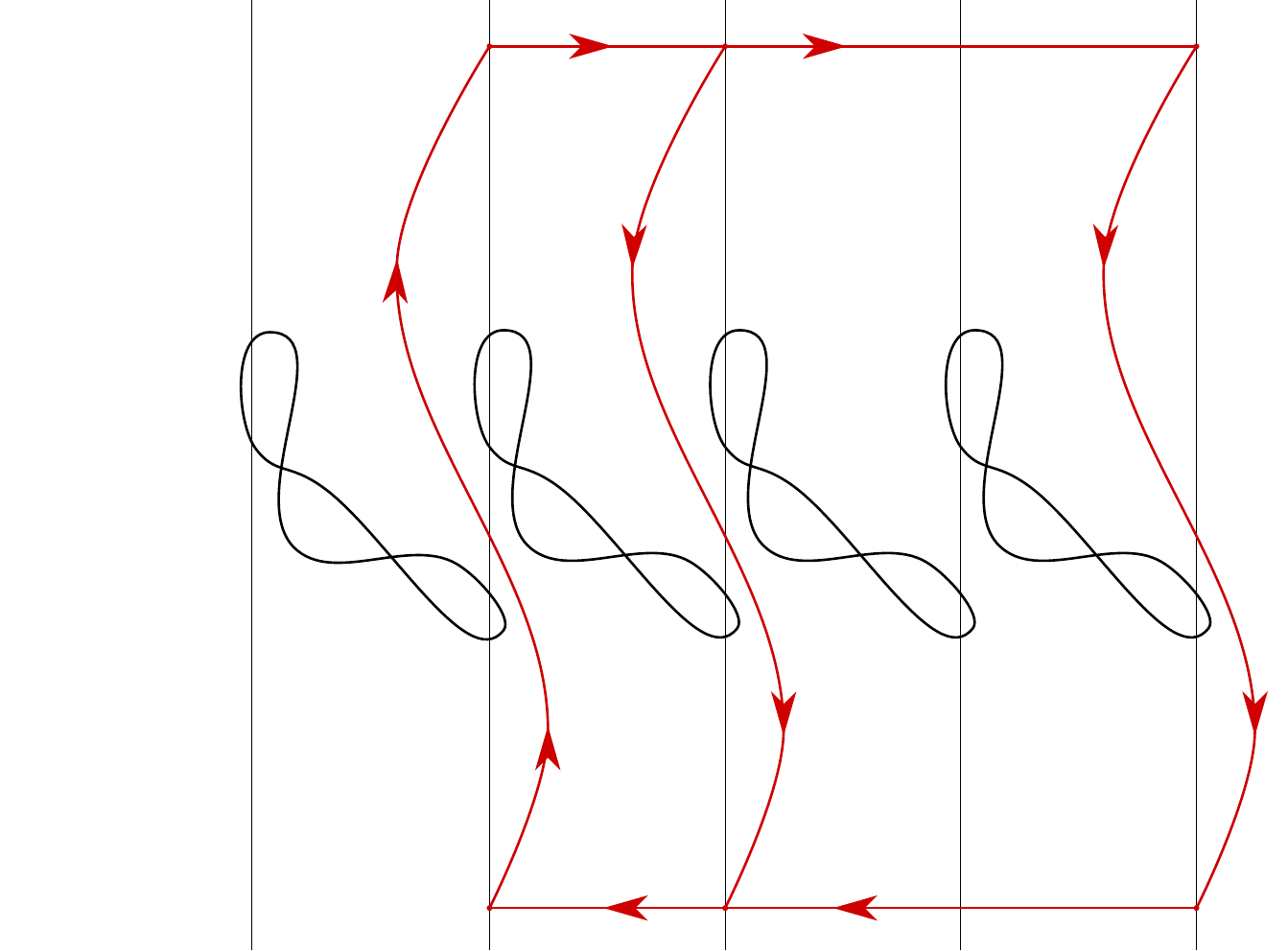
\end{small}
\caption{The family of loops $(\tilde{\gamma}_n)$}
\label{fig:contractible}
\end{center}
\end{figure}
We now assume that $M\setminus c(S^1)$ has only one unbounded component $C$.
Let us identify $M$ with $S^1\times\R$ in the remaining of this proof
in order to fix notations.
Let $\pi : \R^2 \to S^1\times\R$ be the universal cover of $S^1\times\R$.
By compactness of $c(S^1)$, there exists $A>0$ such that $c(S^1)\subset S^1\times
(-A,A)$.
Let $y_0 > A$, since $S^1\times (-\infty,-A)$ and $S^1\times (A,+\infty)$
belong to the same component of $M\setminus c(S^1)$, there exists
a smooth path $\alpha:[0,1]\to M\setminus c(S^1)$ such that
$\alpha(0)=(0,-y_0)$ and $\alpha(1)=(0,y_0)$.
Let $\beta_0$ be the smooth lift of $\alpha$ in $\R^2$
such that $\beta_0(0) = (0,-y_0)$ and $\beta_0(1) = (n_0,y_0)$
for some $n_0 \in \Z$ that we can take equal to $n_0=0$ by 
chaining $\alpha$ with $t\mapsto (tn_0\bmod 1,y_0)$.
Let $\delta_{n,\pm}:[0,1]\to\R^2$ be the path $t\mapsto (nt,\pm y_0)$
and $\beta_n:[0,1]\to\R^2$ be the family of lifts
$\beta_n := (n,0) + \beta_0$, $n\in\N$.
We define the family of loops $\tilde{\gamma}_n\in\Lambda\R^2$
by 
\begin{equation*}
    \tilde{\gamma}_n := \beta_0\cdot \delta_{n,+} \cdot \beta_n^{-1} \cdot
    \delta_{n,-}^{-1}.
\end{equation*}
They project to $\gamma_n := \pi\circ\tilde{\gamma}_n$ in
$M\setminus c(S^1)$.
Let $q_0\in\R^2$ be a lift of some point of $c(S^1)$ and
define $q_n := q_0 + (n,0)$.
Then the first homology group $H_1(\R^2\setminus\{q_n\}_{n\in\Z})$ is
the free abelian group with generators $(g_n)_{n\in\Z}$,
and by construction the class of $\tilde{\gamma}_n$
is $g_1 + g_2 + \cdots + g_n$.
Since the covering transformations of $\R^2\setminus\{q_n\}_{n\in\Z}\to S^1\times\R
\setminus \pi(q_0)$, which form a group isomorphic to $\Z$,
act on the first homology group by $k\cdot g_i = g_{i+k}$,
we see that, if $n\neq m$ and $k,l\in\Z^*$,
the iterated loops $\gamma_n^k$ and $\gamma_m^l$ are 
not freely homotopic in $M\setminus \pi(q_0)$ and
hence in the unbounded component $C$ of $M\setminus c(S^1)$.
We want to apply the $(-\nabla E)$-flow to $\gamma_n$,
$n\in\N$.
Since $c$ is a closed geodesic, the unbounded component $C$ of $M\setminus
c(S^1)$ is a locally convex neighborhood. Hence $\Lambda C$ is preserved
by the $(-\nabla E)$-flow.
Since $M$ is complete, the set of points at distance less than $\ell >0$
from $c(S^1)$ is compact.
Moreover, the image of $\gamma_n$ by the flow (when defined) is kept inside this compact
set for $\ell = \frac{1}{2}\sqrt{E(\gamma_n)}$.
Thus one can apply the $(-\nabla E)$-flow to $\gamma_n$ at all time $t>0$
and ultimately get a closed geodesic of $C$ homotopic to $\gamma_n$.
We thus get a family of closed geodesics which are not
homotopic and not iterations of each other.

Now that Theorem~\ref{thm:main} is proved under hypothesis \ref{it:contractible},
in order to prove it when there is one self-intersecting closed geodesic $c$
or two intersecting ones $c_1$ and $c_2$, one can assume that these geodesics
are not contractible.
Therefore, in both respective cases, $M\setminus c(S^1)$ or $M\setminus(c_1(S^1)\cup c_2(S^1))$
has exactly two unbounded connected components $C_-$ and $C_+$,
which are locally convex by construction.
The intersection hypothesis then implies that none of the boundaries $\partial C_\pm$
is totally geodesic.
Hence the conclusion follows by applying Theorem~\ref{thm:Bangert}.

\section{Geodesic of non-zero average index}\label{se:mind}

We assume that there exists a closed geodesic $c\in\Lambda M$ of average index
$\mind(c) > 0$.  If $c$ is contractible or self-intersecting, we already know
that there are infinitely many closed geodesics.
Let us assume that $c$ is an embedded curve generating $\pi_1(M)\simeq\Z$.
By a slight abuse of notation, we identify the loop $c:S^1\to M$
with its lift $\R\to M$.

\begin{lem}\label{lem:delta}
    There exist $k\in\N^*$ and $\delta \in (0,1]$ such that
    for all $s\in \R$, there exists a
    Jacobi field $J:\R\to c^*TM$ of $c$
    with
    \begin{enumerate}
        \item\label{it:Js} $J(s)=0$,
        \item $J|_{(s,s+\delta)}$ non-vanishing,
        \item\label{it:Js+t} $J(s+t)=0$ for some $t\in [\delta,k]$.
    \end{enumerate}
\end{lem}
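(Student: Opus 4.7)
The plan is to extract the desired Jacobi fields from the fact that $\ind_\Omega(c^m)\to\infty$, and to harvest uniform constants $\delta, k$ from compactness of $c(S^1)$ together with continuous dependence of Jacobi fields on initial data.

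First, since $\mind(c) > 0$, Bott's inequality (\ref{eq:Bott}) gives $\ind(c^m)\to\infty$, and then the concavity inequality (\ref{eq:conc}) yields $\ind_\Omega(c^m)\to\infty$. I would fix $k\in\N^*$ with $\ind_\Omega(c^k)\ge 1$. For each $s\in\R$, let $t_1(s)\in(0,+\infty]$ denote the first conjugate time past $s$ along $c$, i.e.\ the smallest $t>0$ such that some nontrivial Jacobi field $J$ along $c$ satisfies $J(s)=J(s+t)=0$. Applying the Morse index theorem (\ref{eq:MorseIndexThm}) to $c^k$ reparametrized so as to have base point $c(s)$ gives
\begin{equation*}
    \sum_{0<t<k}\mu_s(t) \;=\; \ind_\Omega(c^k)\;\ge\;1,
\end{equation*}
where $\mu_s(t)$ counts independent Jacobi fields vanishing at $s$ and $s+t$. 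Hence $t_1(s)\le k$ for every $s$.

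The heart of the argument is a uniform positive lower bound on $t_1$. I would show that $t_1:\R\to (0,k]$ is lower semicontinuous: if $s_n\to s$ and $t_1(s_n)\to a$, I pick nontrivial Jacobi fields $J_n$ along $c$ with $J_n(s_n)=J_n(s_n+t_1(s_n))=0$ normalized so that $\|\nabla_{\dot c}J_n(s_n)\|=1$; by continuous dependence of solutions of the Jacobi equation on initial data, a subsequence converges to a nontrivial Jacobi field $J$ with $J(s)=J(s+a)=0$, giving $t_1(s)\le a$. Because $c$ is $1$-periodic, the coefficients of the Jacobi equation along $c$ are $1$-periodic in $s$, so $t_1$ is $1$-periodic and descends to a lower semicontinuous function on the compact quotient $\R/\Z$; it therefore attains a minimum $\delta_0$, which is strictly positive since the conjugate radius is positive at any fixed point of $c$.

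Setting $\delta:=\min(\delta_0,1)\in(0,1]$, I conclude as follows: for any $s\in\R$, pick a nontrivial Jacobi field $J$ with $J(s)=J(s+t_1(s))=0$. By minimality of $t_1(s)$, no value $t\in(0,t_1(s))$ admits a nontrivial Jacobi field vanishing at $s$ and $s+t$, so in particular $J$ has no zero in the open interval $(s,s+t_1(s))\supset (s,s+\delta)$; and $t:=t_1(s)\in[\delta,k]$ satisfies all three required properties. The only step that is not purely formal is the lower semicontinuity of $t_1$, where some care is needed to normalize the Jacobi fields $J_n$ so that the limiting field does not degenerate to zero; once that is done, the rest is a direct combination of the results already recalled in Section~\ref{se:preliminaries}.
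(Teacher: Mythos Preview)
Your argument is correct, and the first half—extracting $k$ from $\mind(c)>0$ via Bott's inequality (\ref{eq:Bott}), the concavity bound (\ref{eq:conc}) and the Morse index theorem (\ref{eq:MorseIndexThm})—is exactly what the paper does. One notational slip: the sum $\sum_{0<t<k}\mu_s(t)$ equals $\ind_\Omega\big((c^k)_s\big)$, which a priori depends on the base point $c(s)$, not $\ind_\Omega(c^k)$; but since concavity bounds it below by $\ind(c^k)-\dim M+1$ and the free-loop index $\ind(c^k)$ is $S^1$-invariant, a single $k$ works uniformly in $s$, just as in the paper.

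Where you genuinely diverge is in producing $\delta$. The paper takes the injectivity radius $r>0$ along the compact curve $c(S^1)$ and sets $\delta=r/|\dot c(0)|$; the point is then the classical fact that no conjugate point can occur within the injectivity radius, so a Jacobi field vanishing at $s$ cannot vanish again before $s+\delta$. You instead show that the first-conjugate-time function $s\mapsto t_1(s)$ is lower semicontinuous (by a normalization-and-limit argument on Jacobi fields) and $1$-periodic, hence attains a positive minimum $\delta_0$ on $\R/\Z$. Both routes are short. The paper's is more geometric and yields an explicit $\delta$ in terms of a familiar Riemannian quantity; yours is self-contained, does not invoke the injectivity radius, and would carry over verbatim to settings (e.g.\ certain Finsler or Lorentzian ones) where the link between injectivity and conjugate radii is less immediate.
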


\begin{proof}
Given a closed geodesic $\gamma\in\Lambda M$
and $s\in S^1$,
let us denote by $\gamma_s : [0,1]\to M$
the geodesic path $\gamma_s(t) := \gamma(s+t)$.
Since $\mind(c)>0$,
Bott iteration inequality (\ref{eq:Bott}) and the concavity bound
(\ref{eq:conc}) imply
that there exists $k\in\N^*$ such that
\begin{equation*}
    \ind_\Omega((c^k)_s) \geq 1,
    \quad \forall s\in S^1.
\end{equation*}
Let us fix such a $k\geq 1$.
Then according to the Morse index theorem (\ref{eq:MorseIndexThm}) for all $s\in\R$
we can find a non-zero Jacobi field $J$ of $c$
satisfying conditions (\ref{it:Js}) and (\ref{it:Js+t}).
Let $r>0$ be the injectivity radius along the curve $c$.
We define $\delta>0$ by
\begin{equation*}
    \delta = \frac{r}{|\dot{c}(0)|},
\end{equation*}
(we recall that $c$ has constant speed $|\dot{c}(0)|$).
Then by definition of $r$, $\delta < 1$.
Given any Jacobi field $J\in\J(c)\setminus 0$,
there exists a smooth family of geodesics $(\gamma_s)$
with $\gamma_0=c$ such that $J=\partial_s\gamma_s|_{s=0}$.
Suppose that $J(0)=J(t)=0$ with $t<\delta$,
then for some $s$ close to $0$, $\gamma_s$
must intersect $c$ at $c(t_0)$ and $c(t_1)$
with $2|t_0|$ and $2|t_1-t|$ strictly less than $\delta-t$
so that $|t_1-t_0|<\delta$ which contradicts the definition
of the injectivity radius $r$. Hence $\delta$ fulfills the condition
of the lemma for any non-zero Jacobi fields vanishing at $0$.
\end{proof}

In order to fix notation,
let us identify the image of the loop $c$ to $S^1\times \{ 0\}$,
with $c(s)=(s,0)$ for $s\in S^1$,
so that $M\setminus c(S^1)$ is the disjoint union of
the neighborhood $S^1\times (-\infty,0)$ of $-\infty$
and the neighborhood $S^1\times (0,+\infty)$ of $+\infty$
(we only need this identification to be a homeomorphism).
We now use Lemma~\ref{lem:delta} to 
find $n\leq \lceil{1/k(\delta-\varepsilon)}\rceil +1$
geodesic chords $\alpha_1,\ldots,\alpha_n$
lying inside $S^1\times [0,+\infty)$ 
and intersecting $c$ only at endpoints
so that the unbounded component $C_+$ of
$S^1\times (0,+\infty) \setminus \bigcup_i \alpha_i([0,1])$
is locally convex and not a closed geodesic.
We can do the same on the other side and eventually
find two locally convex neighborhoods $C_\pm$ of $\pm\infty$
whose boundaries are not totally geodesic
and with non-intersecting closure.
To make the statement precise,
let $p : S^1 \to [0,1)$ be the natural bijection of $S^1=\R/\Z$
to the fundamental domain $[0,1)$ and
$\pi : M\simeq S^1\times\R \to [0,1)$ be the induced projection
so that $\pi\circ c(t) = t$ for $t\in[0,1)$.

\begin{figure}
\begin{center}
\begin{small}
\def\svgwidth{0.6\textwidth}
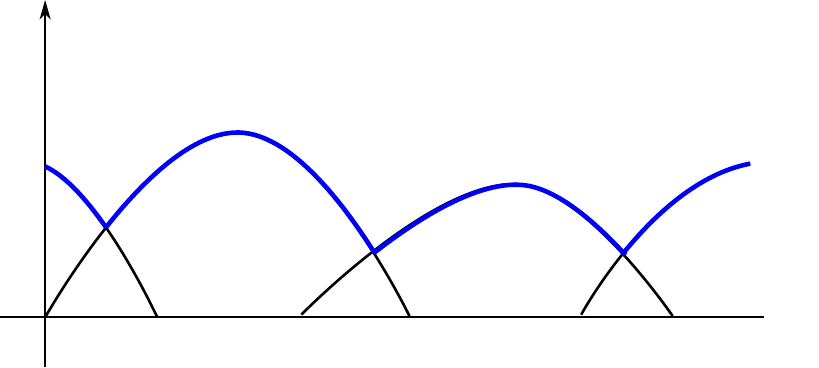
\end{small}
\caption{Construction of the locally convex neighborhood $C_+$}
\label{fig:label}
\end{center}
\end{figure}
\begin{proof}
    Let $k\in\N^*$ and $\delta>0$ be given by Lemma~\ref{lem:delta}.
    Let $\varepsilon < \frac{\delta}{4}$ be a small non-zero real number.
    Up to a translation of the parametrisation of $c$,
    we construct inductively a family
    $\alpha_1,\dotsc,\alpha_n$, $n\leq \lceil{1/k(\delta-\varepsilon)}\rceil$,
    of geodesic chords satisfying:
    \begin{enumerate}
        \item \label{it:initial} $\alpha_1(0)=c(0)$.
        \item\label{it:interc}
            $\alpha_i:[0,1]\to S^1\times [0,+\infty)$ is
            a geodesic path intersecting $c$ only at
            $\alpha_i(0)$ and $\alpha_i(1)$.
        \item\label{it:delta}
            If $a_i:=\pi\circ\alpha_i(0)$ and $b_i:=\pi\circ\alpha_i(1)$
            denote the positions of endpoints, then
            \begin{equation*}
                b_i-a_i>\delta-2\varepsilon \quad\text{ and }\quad
                [a_i,b_i]\subset \pi\circ\alpha_i([0,1])
            \end{equation*}
            for $1\leq i\leq n$.
            And for $i=n$, 
            \begin{equation*}
                [0,b_n]\cup [a_n,1) \subset
                \pi\circ\alpha_n([0,1]). 
            \end{equation*}
        \item\label{it:order}
            The position of endpoints satisfy
            \begin{equation*}
                \begin{cases}
                    a_i < a_{i+1} < b_i <b_{i+1},& \quad 1\leq i\leq n-2,\\
                    a_{n-1} < a_n < b_{n-1},& \\
                    a_1 = 0 \leq b_n.
                \end{cases}
            \end{equation*}
    \end{enumerate}
    Suppose $\alpha_1,\dotsc,\alpha_q$ satisfies properties (\ref{it:interc})
    and (\ref{it:delta}) and relation of property (\ref{it:order}) for $i\in\{
    1,\dotsc, q-1\}$.  To construct $\alpha_1$, we take $b_0 := 0$ in the
    following then we translate the parametrisation of $c$  once for all so
    that property (\ref{it:initial}) is fulfilled.  By definition, the position
    $b_q\in[0,1)$ satisfies $c(b_q)=\alpha_q(1)$.  According to
    Lemma~\ref{lem:delta}, there exists a Jacobi field along $c$ such that
    $J(b_q-\varepsilon) = 0$, $J$ does not vanish on
    $I:=(b_q-\varepsilon,b_q+\delta -\varepsilon)$ and $J(b_q-\varepsilon+t)=0$
    for some $t\in[\delta,1]$.  Up to a change of sign, one can assume that
    $J|_I$ is pointing inside $S^1\times (0,+\infty)$.  Since there exists a
    smooth family $(\beta_s)_{s\in (-1,1)}$ of geodesic paths such that $J|_I =
    \frac{\partial \beta_s}{\partial s}|_{s=0}$, it implies that there exists a
    geodesic path $\alpha_{q+1} : [0,1]\to S^1\times [0,+\infty)$ intersecting
    $c$ (transversally) only at its endpoints with $a_{q+1} < b_q < b_{q+1} -
    \delta - 2\varepsilon$ unless $\pi\circ\alpha_{q+1}([0,1]) =
    [0,b_{q+1}]\cup [a_{q+1},1)$, in this case we set $n:=q+1$.  In the
    exceptional case where $b_n=0$, we construct a last geodesic chord
    $\alpha_{n+1}$ satisfying property (\ref{it:interc}), with at least one
    endpoints different from $c(0)$ and whose image projects on a neighborhood
    of $c(0)\in S^1$.

    Thus we get a family of geodesic chords such that consecutive chords intersect
    in $S^1\times (0,+\infty)$ and the union of their images $\bigcup_i \alpha_i([0,1])$
    projects onto $S^1$. By construction, the unbounded component $C_+$ of
    $S^1\times (0,+\infty)\setminus \bigcup_i\alpha_i([0,1])$ has a boundary
    which is broken geodesic with angles strictly less than $\pi$.
    By symmetry, we get two disjoint neighborhoods of $+\infty$
    and $-\infty$ respectively which are locally convex and
    whose boundaries are not totally geodesic,
    we can thus apply Theorem~\ref{thm:Bangert}.
\end{proof}

\section{Two homologically visible geodesics}\label{se:homological}

Here $M$ denotes a complete Riemannian cylinder.
We fix an identification of $\pi_1(M)$ with $\Z$ and
denote by $[\gamma]\in\Z$ the class of a loop
$\gamma\in\Lambda M$.
We assume that there exists two geometrically distinct 
and homologically visible closed geodesics.
We suppose by contradiction that there are only finitely many (geometrically distinct)
closed geodesics in $M$.
By the previous cases of Theorem~\ref{thm:main},
every prime closed geodesic of $M$ must be embedded,
non-contractible, without intersections with another
closed geodesic, and of zero average index.
Thus the closed geodesics of $M\simeq S^1\times \R$ are naturally ordered
by their intersection with $*\times\R$ where $*$
denotes any point of $S^1$. The order is independent
of the choice of $*\in S^1$.
We will say that two closed geodesics are consecutive
if they are so with respect to this order.
Since we assume that the $S^1$-orbits of closed geodesics are isolated,
given a closed geodesic, one can talk about the next and the previous one
with respect to this order.

\begin{lem}\label{lem:compactcylinder}
    There exists two closed embedded geodesics $c_1$ and $c_2$ of $M$
    with degree $[c_1]=[c_2]=1$ bounding a compact locally convex cylinder
    $C\simeq S^1\times [0,1]$ such that
    \begin{enumerate}
        \item $c_1$ is a local minimum of $E|_{\Lambda C}$,
        \item $c_2$ is not a local minimum of $E|_{\Lambda C}$,
        \item $c_1$ and $c_2$ are the only
            closed geodesics of $M$ inside $C$
            that have homologically visible iterates.
    \end{enumerate}
\end{lem}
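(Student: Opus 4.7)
The plan is to choose $c_1$ and $c_2$ as two consecutive closed geodesics along $M$ that admit a homologically visible iterate, and then to establish~(1) and~(2) by a minimum versus mountain-pass dichotomy inside the cylinder $C$ they bound. By the contradiction hypothesis and the preceding cases of Theorem~\ref{thm:main}, every prime closed geodesic of $M$ is embedded, non-contractible, disjoint from every other closed geodesic, and of zero average index. Reparametrizing if necessary so that each of them generates $\pi_1(M) \simeq \Z$ with degree $+1$, the finitely many closed geodesics of $M$ are totally ordered by their position along the $\R$-factor of $S^1 \times \R$. Among them I consider the sub-list of those admitting a homologically visible iterate, which by hypothesis contains at least two elements; let $c_1 < c_2$ be two consecutive such. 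They bound a compact region $C \simeq S^1 \times [0,1]$ whose boundary $\partial C = c_1(S^1) \cup c_2(S^1)$ is a union of two closed geodesics, so $C$ is locally convex, and property~(3) follows at once from the consecutivity.

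For property~(1), I use the finite-dimensional approximation $\Omega$ of $\Lambda C$ from Section~\ref{se:preliminaries}: the $(-\nabla E)$-flow preserves $C$ and the sublevel sets of $E|_\Omega$ restricted to the degree-$1$ component are compact, so a standard minimization argument produces a critical point $c$ of $E$ in the component $\{[\gamma] = 1\} \subset \Lambda C$ that is a local minimum of $E|_{\Lambda C}$. Hence $\lochom_0(S^1 \cdot c) \neq 0$, so $c$ is homologically visible, and property~(3) forces $c \in \{c_1, c_2\}$; I relabel if needed so that $c = c_1$.

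For property~(2), I argue by contradiction and suppose $c_2$ is also a local minimum of $E|_{\Lambda C}$. Since $c_1$ and $c_2$ are then two isolated strict local minima in the connected component $\Omega \cap \{[\gamma] = 1\}$ and the negative gradient flow is well defined inside $C$ for all $t \ge 0$, the mountain pass lemma yields a third critical point $\tilde c$ of mountain pass type with $E(\tilde c) > \max(E(c_1), E(c_2))$. Then $\tilde c$ is a closed geodesic in $C$ distinct from $c_1$ and $c_2$, so by property~(3) every iterate of $\tilde c$ is homologically invisible, forcing $\lochom_*(S^1 \cdot \tilde c) = 0$. However, for every small enough neighborhood $U$ of $S^1 \cdot \tilde c$ in $\Lambda M$, the set $U \cap E^{-1}([0, E(\tilde c)))$ is disconnected, and the long exact sequence of the pair $(U, U \cap E^{-1}([0, E(\tilde c))))$ then yields $\lochom_1(S^1 \cdot \tilde c) \neq 0$, contradicting the invisibility. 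The main obstacle is to carry out this mountain pass argument rigorously in the finite-dimensional setting $\Omega$, which is handled by cutting off the gradient flow outside a neighborhood of $C$ exactly as in the proof of Theorem~\ref{thm:Bangert}.
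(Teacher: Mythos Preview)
Your proof is correct and follows essentially the same strategy as the paper's: choose $c_1,c_2$ consecutive among the closed geodesics admitting a homologically visible iterate, minimize $E$ in the degree-one component of $\Lambda C$ to identify which boundary curve is a local minimum, and run a mountain-pass argument to exclude the other from being one as well (the paper phrases these two steps as separate general claims before concluding, but the content is the same). One minor remark: the cutoff of the gradient flow you invoke from the proof of Theorem~\ref{thm:Bangert} is unnecessary here, since $C$ is compact and locally convex, so $E|_{\Lambda C}$ already satisfies Palais--Smale and the negative gradient flow is complete.
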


\begin{proof}
    We first show that two consecutive closed
    geodesics among closed geodesics that possess homologically visible
    iterates cannot be both local minima of $E|_{\Lambda C'}$
    if $C'$ is the compact cylinder that they bound.
    By contradiction, let us assume so and let us call
    $\gamma_0$ and $\gamma_1$ these two geodesics.
    Up to a change of parametrization, one can assume that
    $[\gamma_0]=[\gamma_1]$ and thus that these two
    geodesics are homotopic in $\Lambda C'$.
    Let
    \begin{equation*}
        \Pi := \{ h:[0,1]\to\Lambda C' \text{ continuous } |\
            h(0) := \gamma_0 \text{ and }
        h(1) := \gamma_1 \}
    \end{equation*}
    denote the set of homotopy of loops in $C'$ starting at $\gamma_0$
    and ending at $\gamma_1$.
    We consider the following min-max:
    \begin{equation*}
        \tau = \inf_{h\in\Pi} \max E\circ h.
    \end{equation*}
    Let $e:=\max(E(\gamma_0),E(\gamma_1))$. Since
    $\gamma_0$ and $\gamma_1$ are local minima of $E|_{\Lambda C'}$,
    $\tau > e$.
    By compactness of $C'$, $E|_{\Lambda C'}$ satisfies Palais-Smale
    (alternatively, one can work in the compact finite-dimensional
    manifold of $k$-broken-geodesics of energy $\leq c+\varepsilon$
    for a large $k\in\N$ and $\varepsilon>0$).
    By local convexity of $C'$,
    the $(-\nabla E)$-flow preserves $\Lambda C'$.
    By the minimax principle, $\tau$ is thus a critical value of
    $E|_{\Lambda C'}$ and there exists a homologically visible
    closed geodesic $\gamma\in\Lambda C'$ of energy $\tau$.
    Hence $\gamma_0$ and $\gamma_1$ are not consecutive,
    a contradiction.

    By a similar argument, we show that one out of two consecutive
    closed geodesics among those that possess homologically visible
    iterates
    is a local minimum of
    $E|_{\Lambda C'}$.
    Indeed, otherwise one has that
    \begin{equation*}
        \inf_{\substack{\gamma\in\Lambda C'\\ [\gamma]=1}} E(\gamma)
        < \min(E(\gamma_0),E(\gamma_1)),
    \end{equation*}
    and this infimum is reached for some closed geodesic in $C'$
    by compactness and local convexity of $C'$
    (and this is not a point since $E(\gamma)\geq (2r)^2$
    for all $\gamma\in\Lambda C'$ of degree $[\gamma]=1$
    where $r>0$ denotes the injectivity radius of the compact
    Riemannian manifold with boundary $C'$).
    This new closed geodesic is a local minimum of
    $E$ by definition and thus homologically visible.

    The requirements of the lemma are thus fulfilled by taking
    any consecutive homologically visible closed geodesics.
\end{proof}

\begin{proof}[Proof of Theorem~\ref{thm:main}]
    Let $c_1$ and $c_2$ be closed geodesics of $M$
    satisfying Lemma~\ref{lem:compactcylinder}.
    We will reach a contradiction by finding a
    homologically visible geodesic which is not
    $c_1$ or $c_2$ and arbitrarily close to $C$.

    Let $x\in\interior{C}$ and let $\gamma_1\in\Lambda C$
    be the loop of degree $[\gamma_1]=1$ based at $x$
    of minimal length.
    It exists by local convexity and compactness of $C$.
    The loop $\gamma_1$ is not a periodic geodesic
    (this is a geodesic as a path $[0,1]\to C$
    but not as a loop $S^1\to C$)
    since there is no local minimum of $E|_{\Lambda C}$
    but $c_1$.
    This loop lies inside $\interior{C}$ so that
    either the connected component of $C\setminus \gamma_1(S^1)$ 
    containing $c_1$ or the connected component containing $c_2$
    is locally convex -- depending on the angle of $\gamma_1$ at
    $\gamma_1(0)=\gamma_1(1)=x$.
    If the connected component containing $c_2$ were locally convex,
    then the infimum of $E$ among loops of degree one
    lying inside the locally convex compact cylinder bounded
    by $\gamma_1$ and $c_2$
    would give a closed geodesic loop $\neq c_1$
    which would be a local minimum.
    Thus the connected component of $C\setminus\gamma_1(S^1)$
    containing $c_1$ is a locally convex compact cylinder.
    Hence the unbounded component of $M\setminus\gamma_1(S^1)$
    containing $c_1$ is a locally convex neighborhood of $-\infty$
    which is not totally geodesic
    since $\gamma_1$ is not a closed geodesic.

    \begin{figure}
        \begin{center}
            \begin{small}
                \def\svgwidth{0.6\textwidth}
                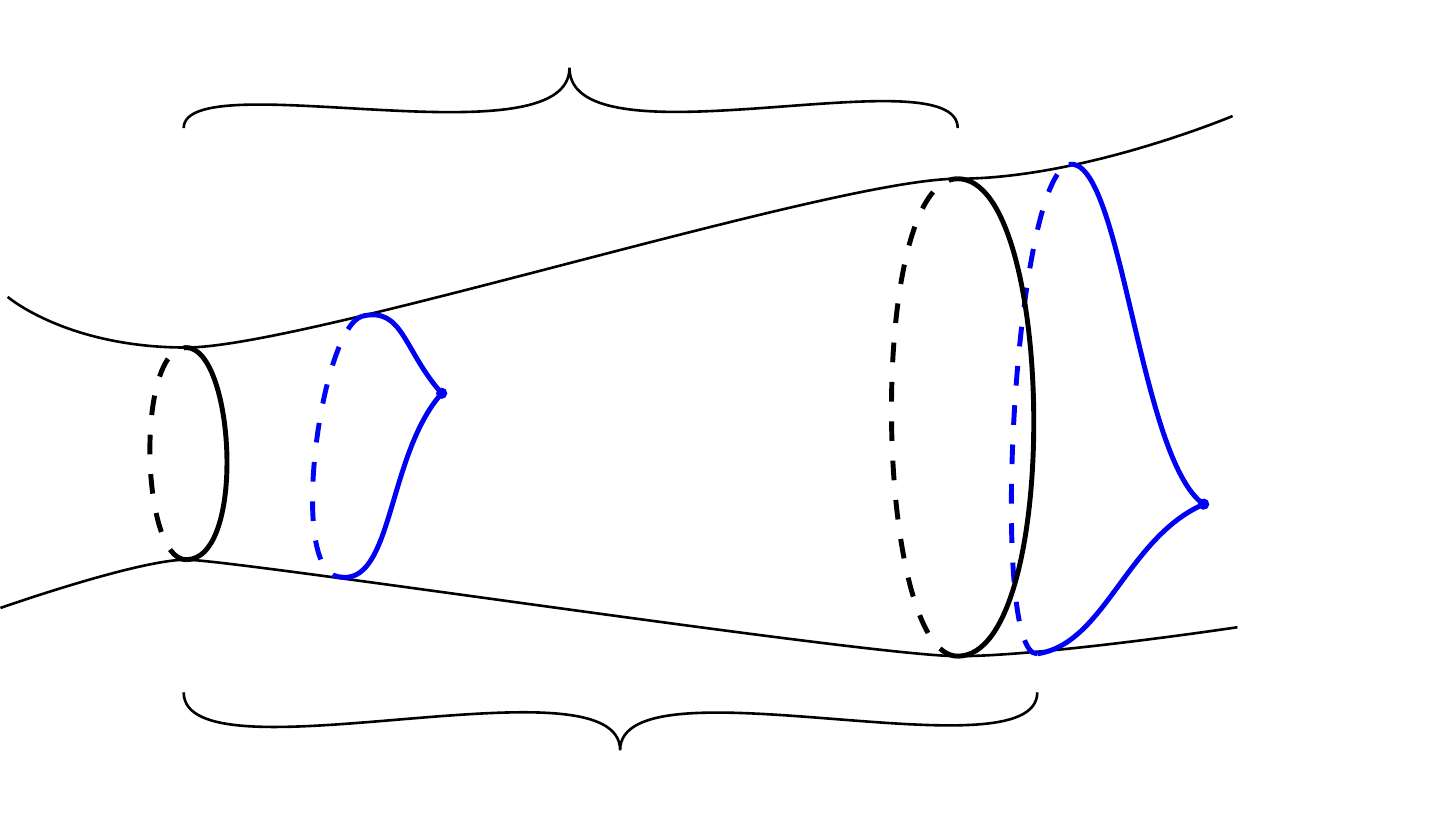
            \end{small}
            \caption{Construction of cylinder $Z$}
            \label{fig:cylinderZ}
        \end{center}
    \end{figure}
    Let $c_3$ be the closed geodesic succeeding $c_2$
    among closed geodesic (possibly non homologically visible),
    if it exists.
    Let $C'$ be either the compact cylinder that $c_2$ and $c_3$ bound
    or the infinite cylinder $\simeq S^1\times [0,+\infty)$
    with boundary $c_2$ and ending at $+\infty$, depending on
    the existence of $c_3$
    (so that $C\cap C' = c_2(S^1)$ in both cases).
    Let $y\in\interior{C'}$ and let $\gamma_2\in\Lambda C'$
    be a loop of degree $[\gamma_2]=1$ based at $y$ of
    minimal length.
    Since $C'$ is complete and locally convex, it exists.
    It cannot be a closed geodesic for $c_3$ succeeds $c_2$.
    One of the two unbounded components of $M\setminus\gamma_2(S^1)$
    is thus locally convex, depending on the angle of $\gamma_2$
    at $\gamma_2(0)=\gamma_2(1)=y$.
    If the neighborhood of $+\infty$ was the locally convex one,
    by Theorem~\ref{thm:Bangert} applied to the locally convex
    neighborhood of $-\infty$ defined above with $\gamma_1$
    and this neighborhood of $+\infty$,
    there would be infinitely many closed geodesics.
    Thus the neighborhood of $-\infty$ is the locally convex
    unbounded component of $M\setminus \gamma_2(S^1)$.
    Restricting this neighborhood to the compact cylinder
    $C\cup C'$, one gets a compact locally convex cylinder
    $Z$ intersecting only two
    geodesics $c_1$ and $c_2$ that possess homologically visible
    iterates, moreover
    $c_1(S^1) \subset \partial Z$ and
    $c_2(S^1) \subset \interior{Z}$.

    Let $k\in\Z^*$ be such that $\lochom_*(S^1\cdot c_2^k)\neq 0$.
    Let $\Lambda_m\subset \Lambda Z$ be the connected component 
    of loops $\gamma\in\Lambda Z$ of degree $[\gamma]=m$.
    Let $\psi_m : \Lambda_k\to\Lambda_{km}$ be the iteration map
    $\psi_m(\gamma):=\gamma^{m}$.
    According to Bangert-Klingenberg theorem (\ref{eq:BK}),
    there exists $m_0\in\N$ above which for all
    $m\geq m_0$ there exists $e_m > m^2 E(c_2^k)$ such that
    the composition
    \begin{equation*}
        \lochom_*(S^1\cdot c_2^k) \xrightarrow{(\psi_m)_*}
        \lochom_*(S^1\cdot c_2^{km}) \xrightarrow{\textup{inc}_*}
        H_*\left(\big\{ E|_{\Lambda_{km}} < e_m\big\},
        \left\{ E|_{\Lambda_{km}}< m^2 E(c_2^k) \right\} \right)
    \end{equation*}
    is zero.
    According to Gromoll-Meyer theorem (\ref{eq:GM}),
    since $\mind(c_2^k)=k\mind(c_2)=0$,
    there exists infinitely many $m$ such that
    \begin{equation*}
        (\psi_m)_* : \lochom_*(S^1\cdot c_2^k) \to\lochom_* (S^1\cdot c_2^{km})
    \end{equation*}
    is an isomorphism. Let $m\geq m_0$ be such an integer,
    then the inclusion induces a zero map
    \begin{equation*}
        \lochom_*(S^1\cdot c_2^{km}) \xrightarrow{\textup{inc}_*}
        H_*\left(\big\{ E|_{\Lambda_{km}} < e_m\big\},
        \left\{ E|_{\Lambda_{km}}< m^2 E(c_2^k) \right\} \right),
    \end{equation*}
    which contradicts the fact that $c_2^{km}$ is the
    homologically visible critical points of $E|_{\Lambda_{km}}$ of
    maximal value.
    Indeed, since $Z$ is locally convex, critical points
    of $E|_{\Lambda_{km}}$ are closed geodesics of $Z$ of degree $km$.
    Thus $S^1\cdot c_1^{km}$ and $S^1\cdot c_2^{km}$ are the only homologically
    visible critical circle of $E|_{\Lambda_{km}}$
    (and $E(c_2^{km}) > E(c_1^{km})$ since $c_1$ is the only
    local minimum in $C$).
    Moreover $Z$ is compact and has only isolated closed geodesics,
    we can thus apply Morse theoretical
    arguments since $E|_{\Lambda_{km}}$ has isolated critical circles and
    satisfies Palais-Smale
    or, alternatively, one can restrict $E$ to the finite-dimensional
    subspace of $k$-broken-geodesics of $\Lambda_{km}$ of energy less than $e_m+\varepsilon$
    for some large $k\in\N$ and some $\varepsilon>0$.
    Thus, if $S^1\cdot c_2^{km}$ were the only homologically visible 
    critical circle of energy $\geq m^2E(c_2^k)$,
    Morse deformation lemma would imply $\textup{inc}_*$ to be an isomorphism.
\end{proof}

\section{The case of the plane}\label{se:plane}

Let $M\simeq\R^2$ be a complete Riemannian plane with isolated closed
geodesics. Using what we have seen in the previous sections, we now
give the proof of Theorem~\ref{thm:plane}.

\begin{proof}[Proof of Theorem~\ref{thm:plane}]
When hypothesis \ref{it:plane:selfintersecting}, \ref{it:plane:intersecting}
or \ref{it:plane:mind} is assumed, the conclusion follows from
the same argument as in the case of the cylinder: by construction of
an open neighborhood $C\neq M$ of infinity.
More precisely, this neighborhood $C$ is the unbounded component of
$M\setminus c(S^1)$
or $M\setminus (c_1(S^1)\cup c_2(S^1))$ if $c$ is self-intersecting
or $c_1$ and $c_2$ are intersecting closed geodesics.
In the case when there exists a closed geodesic $c$ of non-zero
average index, $C$ is constructed by ``integrating Jacobi fields''
along $c$ as was done in Section~\ref{se:mind}.

Now, let us assume that all the closed geodesics of $M$ are without
self-intersection and with zero average index. In order to
complete the proof, we must prove that this last case implies the existence of
infinitely many closed geodesics.  Let $c$ be a homologically visible
closed geodesic such that there
is not any homologically visible closed geodesic inside the disk $D$ bounded by
$c$.  Let $G=\bigcup_\gamma
\gamma(S^1)\subset M$ be the union of the images of the closed geodesics
$\gamma$ of $M$.  Let $U$ be the connected component of $M\setminus(D\cup G)$
that contains $c(S^1)$ in its boundary.  Since $U$ contains loops that are not
contractible in $\R^2\setminus D$ (by taking loops close to the boundary
$c(S^1)$), $U$ is not simply connected.  Let $y\in M$ and let $\gamma\in\Lambda
\overline{U}$ be a loop minimizing the length among the non-contractible loops
of $\overline{U}$ based at $y$ (it exists since $\overline{U}$ is complete).
Since $\partial U$ is a disjoint union of closed geodesics, $\gamma$ lies in the
interior of $U$ and is a geodesic path.  Depending on the angle that $\gamma$ makes
at $y$, either the unbounded component of $M\setminus \gamma(S^1)$ is
locally convex and not totally geodesic or the bounded component containing $c$
is locally convex.  In the first case, one can apply Bangert's theorem to
complete the proof. 

We can thus assume that $c$ lies in the interior of a
compact and locally convex subset $K\subset M$ and that $c$ is the only
homologically visible closed geodesic of $K$. 
Since $\mind(c)=0$, the local homology groups $C_d(S^1\cdot c^m)$ are
trivial in degrees $d\geq 4$ for all $m\in\N$.
Let $d\in\{0,1,2,3\}$ be the maximal degree such that $C_d(S^1\cdot c^m)\neq 0$
for some $m\in\N^*$. Let $k\in\N^*$ be such that $C_d(S^1\cdot c^k)\neq 0$.
According to Gromoll-Meyer theory, there exists infinitely many
$m\in\N^*$ such that the map induced by the iteration map
\begin{equation*}
    (\psi_m)_* : \lochom_*(S^1\cdot c^k) \to\lochom_* (S^1\cdot c^{km})
\end{equation*}
is an isomorphism.
As above, according to Bangert-Klingenberg theorem (\ref{eq:BK}),
there exists $m_0\in\N^*$ such that, for all such $m\in\N^*$ greater than $m_0$,
the inclusion of sublevel sets of $E|_{\Lambda K}$ induces the zero map
\begin{equation*}
    \lochom_*(S^1\cdot c^{km}) \xrightarrow{\textup{inc}_*}
    H_*\left(\big\{ E|_{\Lambda K} < e_m\big\},
    \left\{ E|_{\Lambda K}< m^2 E(c^k) \right\} \right),
\end{equation*}
for some $e_m > m^2 E(c^k)$.
Thus, for such an $m$, the long exact sequence of the triple
\begin{equation*}
    \left( \big\{ E|_{\Lambda K} < e_m\big\},
        \left\{ E|_{\Lambda K}< m^2 E(c^k) \right\}\cup S^1\cdot c^{km},
    \left\{ E|_{\Lambda K}< m^2 E(c^k) \right\} \right)
\end{equation*}
implies that
\begin{equation*}
    H_{d+1} \left( \big\{ E|_{\Lambda K} < e_m\big\},
    \left\{ E|_{\Lambda K}< m^2 E(c^k) \right\}\cup S^1\cdot c^{km} \right) \neq 0.
\end{equation*}
Therefore, by Morse deformation lemma applied to the smooth map $E|_{\Lambda K}$
which satisfies the Palais-Smale condition
and whose anti-gradient flow preserves $\Lambda K$
(by compactness and local convexity of $K$), there must be a closed geodesic
$\gamma\in\Lambda K$ such that $\lochom_{d+1}(S^1\cdot \gamma)\neq 0$.
By maximality of $d$, $\gamma$ and $c$ are geometrically distinct.
But $c$ is the only homologically visible closed geodesic of $K$,
a contradiction.
\end{proof}

\bibliographystyle{amsplain}
\bibliography{../biblio}

\providecommand{\bysame}{\leavevmode\hbox to3em{\hrulefill}\thinspace}
\providecommand{\MR}{\relax\ifhmode\unskip\space\fi MR }
\providecommand{\MRhref}[2]{%
  \href{http://www.ams.org/mathscinet-getitem?mr=#1}{#2}
}
\providecommand{\href}[2]{#2}
\begin{thebibliography}{10}

\bibitem{AM17}
Luca Asselle and Marco Mazzucchelli, \emph{On the existence of infinitely many
  closed geodesics on non-compact manifolds}, Proc. Amer. Math. Soc.
  \textbf{145} (2017), no.~6, 2689--2697.

\bibitem{BTZ}
W.~Ballmann, G.~Thorbergsson, and W.~Ziller, \emph{Closed geodesics on
  positively curved manifolds}, Ann. of Math. (2) \textbf{116} (1982), no.~2,
  213--247.

\bibitem{BK83}
V.~Bangert and W.~Klingenberg, \emph{Homology generated by iterated closed
  geodesics}, Topology \textbf{22} (1983), no.~4, 379--388.

\bibitem{Ban80}
Victor Bangert, \emph{Closed geodesics on complete surfaces}, Math. Ann.
  \textbf{251} (1980), no.~1, 83--96.

\bibitem{Ban93}
\bysame, \emph{On the existence of closed geodesics on two-spheres}, Internat.
  J. Math. \textbf{4} (1993), no.~1, 1--10.

\bibitem{BG91}
Vieri Benci and Fabio Giannoni, \emph{Closed geodesics on noncompact
  {R}iemannian manifolds}, C. R. Acad. Sci. Paris S\'{e}r. I Math. \textbf{312}
  (1991), no.~11, 857--861.

\bibitem{BG92}
\bysame, \emph{On the existence of closed geodesics on noncompact {R}iemannian
  manifolds}, Duke Math. J. \textbf{68} (1992), no.~2, 195--215.

\bibitem{Bes78}
Arthur~L. Besse, \emph{Manifolds all of whose geodesics are closed}, Ergebnisse
  der Mathematik und ihrer Grenzgebiete [Results in Mathematics and Related
  Areas], vol.~93, Springer-Verlag, Berlin-New York, 1978, With appendices by
  D. B. A. Epstein, J.-P. Bourguignon, L. B\'{e}rard-Bergery, M. Berger and J.
  L. Kazdan.

\bibitem{Bir66}
George~D. Birkhoff, \emph{Dynamical systems}, With an addendum by Jurgen Moser.
  American Mathematical Society Colloquium Publications, Vol. IX, American
  Mathematical Society, Providence, R.I., 1966.

\bibitem{Bot56}
Raoul Bott, \emph{On the iteration of closed geodesics and the {S}turm
  intersection theory}, Comm. Pure Appl. Math. \textbf{9} (1956), 171--206.

\bibitem{BM19}
Keith Burns and Vladimir~S. Matveev, \emph{Open problems and questions about
  geodesics}, Ergodic Theory and Dynamical Systems (2019), 1–44.

\bibitem{CJ13}
Erasmo Caponio and Miguel~\'{A}ngel Javaloyes, \emph{A remark on the {M}orse
  {T}heorem about infinitely many geodesics between two points}, International
  Meeting on Differential Geometry (2013), 39--48.

\bibitem{Fet52}
A.~I. Fet, \emph{Variational problems on closed manifolds}, Mat. Sbornik N.S.
  \textbf{30(72)} (1952), 271--316.

\bibitem{Fra92}
John Franks, \emph{Geodesics on {$S^2$} and periodic points of annulus
  homeomorphisms}, Invent. Math. \textbf{108} (1992), no.~2, 403--418.

\bibitem{GM68}
D.~Gromoll, W.~Klingenberg, and W.~Meyer, \emph{Riemannsche {G}eometrie im
  {G}rossen}, Lecture Notes in Mathematics, No. 55, Springer-Verlag, Berlin-New
  York, 1968.

\bibitem{GM69a}
Detlef Gromoll and Wolfgang Meyer, \emph{Periodic geodesics on compact
  riemannian manifolds}, J. Differential Geometry \textbf{3} (1969), 493--510.

\bibitem{Had98}
J.~{Hadamard}, \emph{{Les surfaces \`a courbures oppos\'ees et leurs lignes
  g\'eod\'esiques.}}, {Journ. de Math. (5)} \textbf{4} (1898), 27--73 (French).

\bibitem{Hin93}
Nancy Hingston, \emph{On the growth of the number of closed geodesics on the
  two-sphere}, Internat. Math. Res. Notices (1993), no.~9, 253--262.

\bibitem{LF51}
L.~A. Lyusternik and A.~I. Fet, \emph{Variational problems on closed
  manifolds}, Doklady Akad. Nauk SSSR (N.S.) \textbf{81} (1951), 17--18.

\bibitem{Poi05}
Henri Poincar\'{e}, \emph{Sur les lignes g\'{e}od\'{e}siques des surfaces
  convexes}, Trans. Amer. Math. Soc. \textbf{6} (1905), no.~3, 237--274.

\bibitem{Str90}
Michael Struwe, \emph{Variational methods}, Springer-Verlag, Berlin, 1990,
  Applications to nonlinear partial differential equations and Hamiltonian
  systems. \MR{1078018}

\bibitem{Tho78}
Gudlaugur Thorbergsson, \emph{Closed geodesics on non-compact {R}iemannian
  manifolds}, Math. Z. \textbf{159} (1978), no.~3, 249--258.

\end{thebibliography}

\end{document}